\newcommand{\Id}{I}
\newcommand{\diam}{\operatorname{diam}}
\newcommand{\supp}{\operatorname{supp}}
\newcommand{\C}{\mathbb C}
\newcommand{\R}{\mathbb R}
\newcommand{\indic}{\operatorname{1\negthinspace l}}
\def\m{\mathbf{m}}
\def\s{\mathbf{s}}
\def\<{\langle}
\def\>{\rangle}
\newcommand{\bp}{{\it Proof. }}
\newcommand{\ep}{\hfill $\square$\\}
\newcommand{\be}{\begin{equation}}
\newcommand{\ee}{\end{equation}}
\newcommand{\bes}{\begin{equation*}}
\newcommand{\ees}{\end{equation*}}
\numberwithin{equation}{section}
\newtheorem{theorem}{Theorem}
\newtheorem{proposition}{Proposition}
\newtheorem{lemma}[proposition]{Lemma}
\def\aaa{{\mathcal A}}
\def\ggg{{\mathcal G}} 
 \def\lll{{\mathcal L}}
\def\mmm{{\mathcal M}} \def\ooo{{\mathcal O}}
\def\uuu{{\mathcal U}}
\begin{document}
\title{A semiclassical approach to the Kramers--Smoluchowski equation}

\author[L. Michel]{Laurent~Michel}
\email{lmichel@\allowbreak unice.\allowbreak fr}
\address{Universit\'e C\^ote d'Azur, CNRS, Laboratoire J.-A. Dieudonn\'e, France}
\author[M. Zworski]{Maciej Zworski}
\email{zworski@math.berkeley.edu}
\address{Department of Mathematics, University of California,
Berkeley, CA 94720, USA}

\def\arXiv#1{\href{http://arxiv.org/abs/#1}{arXiv:#1}}

\begin{abstract}
We consider the Kramers--Smoluchowski equation at a low temperature regime
and show how semiclassical techniques developed for the study of the 
Witten Laplacian and Fokker--Planck equation provide quantitative results. 
This equation comes from molecular dynamics and temperature plays the role of 
a semiclassical paramater.
The presentation is self-contained in the one dimensional case, with pointers to the recent paper \cite{Mi16} for results needed in higher
dimensions. One purpose of this note is to provide a simple introduction to semiclassical methods in this context. 
\end{abstract}

\maketitle
\section{Introduction}
The Kramers--Smoluchowski equation describes  
the time evolution of the probability density of a particle undergoing a Brownian motion under the influence of a chemical potential -- see \cite{Berg} for
the background and references.  
Mathematical treatments in the low temperature regime have been provided 
by Peletier et al \cite{Pele} using $ \Gamma$-convergence, by 
Herrmann--Niethammer \cite{herr} using {W}asserstein gradient flows and 
by Evans--Tabrizian \cite{EvTa16}. 

The purpose of this note is to explain how precise quantitative results can be 
obtained using semiclassical methods developed by, among others, Bovier, Gayrard, Helffer, H\'erau, Hitrik, Klein, Nier and Sj\"ostrand \cite{BoGaKl05_01,He88_01,HeKlNi04_01,HeSj85_01,HeHiSj11_01} for the study of spectral asymptotics for Witten Laplacians \cite{Wi82} 
and for Fokker--Planck operators. The semiclassical parameter $ h$ is 
the (low) temperature. This approach is much closer in spirit to the heuristic
arguments in the physics literature \cite{Ey,Kra} and the main point is that
the Kramers--Smoluchowski equation {\em is} the heat equation for the Witten 
Laplacian acting on functions.
Here 
we give a self-contained presentation of the one dimensional case 
and explain how the recent paper by the 
first author \cite{Mi16} can be used to obtain results in higher dimensions. 

Let $\varphi:\R^d\rightarrow \R$ be a smooth function.
Consider the corresponding Kramers-Smoluchowski equation:
\begin{equation}\label{eq:KS}
\left\{\begin{array}{c}
\partial_t\rho= \partial_x \cdot (\partial_x \rho+\epsilon^{-2}\rho \partial_x\varphi)\\
\rho_{\vert t=0}=\rho_0\phantom{********}
\end{array}
\right.
\end{equation}
where $\epsilon\in (0,1]$ denotes the temperature of the system and will be the small asymptotic parameter. Assume that 
there exists $C>0$ and a compact $K\subset\R^d$ such that for all $x\in\R^d\setminus K$, we have
\begin{equation}\label{eq:hyp-croissancephi}
 \vert\partial \varphi(x)\vert\geq\frac 1 C, \ \ \ \ \vert \partial^2_{x_i x_j}\varphi\vert\leq C\vert\partial\varphi\vert^2, \ \ \ \ \varphi(x)\geq C\vert x\vert.
\end{equation}
Suppose additionally that $\varphi$ is a Morse function, that is,
$ \varphi $ has isolated and non-degenerate critical points. Then, thanks to the above assumptions the set $\uuu$ of critical points of $\varphi$ is finite.
For $p=0,\ldots,d$, we denote by $\uuu^{(p)}$ the set of critical points of index $p$. 
Denote 
\begin{equation}\label{eq:defm0m1}
\varphi_0:=\inf_{x \in \R^d}\varphi(x)=\inf_{\m\in\uuu^{(0)}}\varphi(\m) \ \ \text{ and } \ \ \sigma_1:=\sup_{\s\in\uuu^{(1)}}\varphi(\s).
\end{equation}
Thanks to \eqref{eq:hyp-croissancephi}, the sublevel set of $ \sigma_1 $ is
decomposed in finitely many connected components $E_1,\ldots,E_N$:
\begin{equation}
\label{eq:En} 
 \{x\in\R^d,\,\varphi(x)<\sigma_1\} = \bigsqcup_{n=1}^N E_n .
\end{equation}
We assume that 
\begin{equation}\label{eq:hyptopol}
 \inf_{x\in E_n}\varphi(x)=\varphi_0, \ \ \  \forall n=1,\ldots, N, \ \ \text{ and }\ \
 \varphi (\s ) = \sigma_1 , \ \  \forall\s \in \mathcal U^{(1)} . 
\end{equation}
which corresponds to the situation where $\varphi$ admits $N$ wells of the same height. 
In order to avoid heavy notation, we also assume that for 
$n=1,\ldots,N$ the minimum of $\varphi$ on $E_n$ is attained in a single point that we denote by $\m_n$.

The  associated {\em Arrhenius number}, 
$S=\sigma_1-\varphi_0$,  governs the long time dynamics of \eqref{eq:KS}. That is made quantitative in Theorems \ref{th1} below. More general assumptions 
can be made as will be clear from the proofs. We restrict ourselves to the 
case in which the asymptotics are cleanest.

\begin{figure}
 \center
  \scalebox{0.6}{ 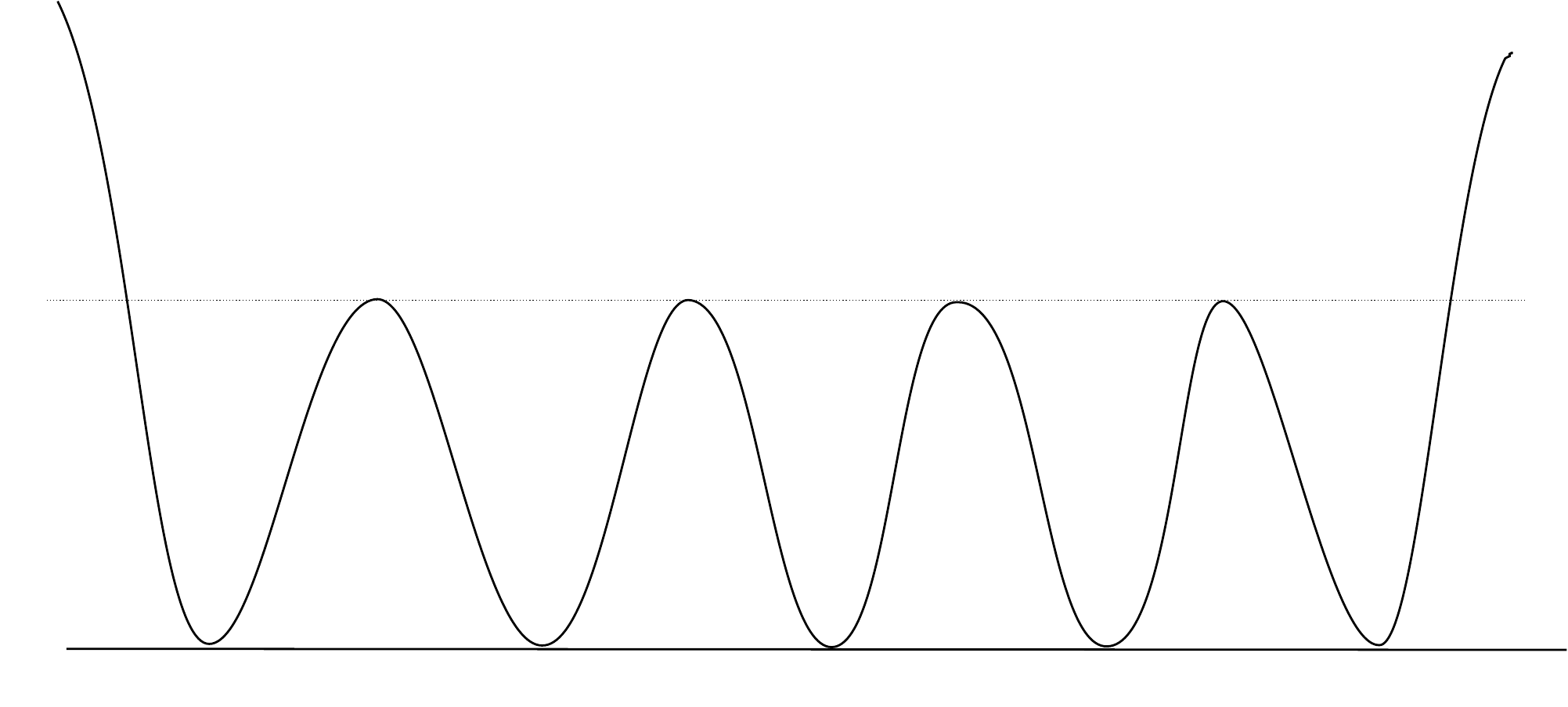}
  \caption{A one dimensional potential with interesting Kramers--Smoluchowski 
  dynamics.}
    \label{fig1}
\end{figure}

To state the simplest result let us assume that $ d = 1 $ and that the  second derivative of $\varphi$ is constant on the sets 
$\uuu^{(0)}$ and $\uuu^{(1)}$:  
\be\label{hyp:phisec-const}
\varphi''(\m)=\mu,\ \ \ \forall \m\in \uuu^{(0)}\ \ \ \text{ and }\ \ \ \varphi''(\s)=-\nu,\ \ \ \forall \s\in \uuu^{(1)}
\ee
for some $\mu,\nu>0$.
 The potential then looks like the one 
shown in Fig.\ref{fig1}. We introduce the matrix
\begin{equation}
\label{eq:simpleA0} A_0 =
\frac \kappa \pi
\begin{pmatrix} 1&\!\! \! - 1  &0&0&\ldots&\ldots&\ldots&0\\
\!\! \! - 1  &2&\!\! \! - 1  &0&\ldots&\ldots&\ldots&0\\
0&\!\! \! - 1  &2&\!\! \! - 1  &0&\ldots&\ldots&0\\
\vdots&0&\!\! \! - 1  &2&\ddots&\ddots&\ldots&0\\
\vdots&\vdots&\ddots&\ddots&\ddots&\ddots&\ddots&0\\
\vdots&\vdots&\ddots&\ddots&\ddots&\ddots&\!\! \! - 1  &0\\
0&\vdots&\ddots&\ddots&\ddots&\!\! \! - 1  &2&\!\! \! - 1  \\
0&0&\ldots&\ldots&\ldots&0&\!\! \! - 1  &1
\end{pmatrix}.
\end{equation}
with $ \kappa=\sqrt{\mu \nu} $.
This matrix is positive semi-definite with a simple eigenvalue at $ 0 $.

\begin{theorem}\label{th1}

Suppose that $ d = 1 $ and $ \varphi $ satisfies \eqref{eq:hyp-croissancephi},
 \eqref{eq:hyptopol} and \eqref{hyp:phisec-const}. 
 Suppose that 
 \begin{equation}
\label{eq:rho0}
 \rho_0=\left(\frac \mu{2\pi\epsilon^2}\right)^{\frac 12} \left( \sum_{n=1}^N \beta_n 
  \indic_{E_n} + \,  r_\epsilon \right) e^{-\varphi/\epsilon^2}   , \ \ \ 
 \lim_{ \epsilon \to 0 } \Vert r_\epsilon\Vert_{L^\infty} =  0,  \ \ \beta\in 
\R^N , 
 \end{equation} 
then the solution to \eqref{eq:KS} satisfies, uniformly for $ \tau \geq 0  $,\begin{equation}\label{eq:weakconv}
\rho( 2\epsilon^2e^{S/\epsilon^2}\tau,x)\ \rightarrow \ \sum_{n=1}^N\alpha_n(\tau)\delta_{\m_n} (x) , 
\ \ \ \epsilon \to 0, 
\end{equation}
in the sense of distributions in $ x $, where  $ S = \sigma_1 - \varphi_0 $ and 
where 
$\alpha(\tau)=(\alpha_1,\ldots,\alpha_n)(\tau)$ solves 
\begin{equation}
\label{eq:dotal} \partial_\tau \alpha= - A_0\alpha , \ \ \ \alpha ( 0 ) = \beta ,
\end{equation}
with $ A_0 $ given by \eqref{eq:simpleA0}.
\end{theorem}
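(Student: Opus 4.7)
The plan is to recast \eqref{eq:KS} as a heat equation for the Witten Laplacian
\[\Delta_{\varphi,h}^{(0)} = -h^2\partial_x^2 + (\partial_x\varphi)^2 - h\partial_x^2\varphi,\qquad h := 2\epsilon^2,\]
and then to use the known semiclassical spectral structure of this operator to reduce the long-time dynamics to an $N$-dimensional linear system. The conjugation $v := e^{\varphi/(2\epsilon^2)}\rho$ gives $\partial_t v = -h^{-2}\Delta_{\varphi,h}^{(0)}v$, and the time change $t = 2\epsilon^2 e^{S/\epsilon^2}\tau = h e^{2S/h}\tau$ transforms this into $\partial_\tau v = -h^{-1}e^{2S/h}\Delta_{\varphi,h}^{(0)}v$; on this rescaled time the $N$ low-lying eigenvalues are $O(1)$ while the rest of the spectrum diverges as $\epsilon\to 0$.

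The essential spectral input, standard in the Witten--Helffer--Sj\"ostrand--Klein--Nier--Bovier--Gayrard framework, is that $\Delta_{\varphi,h}^{(0)} = d_{\varphi,h}^*d_{\varphi,h}$, with $d_{\varphi,h}u = hu'+u\varphi'$, has exactly $N$ exponentially small eigenvalues of size $he^{-2S/h}$ (the lowest being $0$, with eigenfunction $\propto e^{-\varphi/h}$), while the remainder of its spectrum lies above $c_0 h$ by harmonic approximation at the critical points. A basis of quasimodes for the small-spectral subspace is
\[\theta_n^h(x) = Z_n(h)^{-1}\chi_n(x)e^{-\varphi(x)/h},\qquad Z_n(h) := \Vert\chi_n e^{-\varphi/h}\Vert_{L^2}\sim (\pi h/\mu)^{1/4}e^{-\varphi_0/h},\]
with $\chi_n \in C_c^\infty(\R)$ supported near $E_n$ and equal to $1$ outside small windows around the saddles, so that $\mathrm{span}\{\theta_n^h\}$ is exponentially close to the small eigenspace. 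Since $d_{\varphi,h}\theta_n^h = Z_n^{-1}h\chi_n'\,e^{-\varphi/h}$ is supported only on the two transition regions separating $E_n$ from $E_{n\pm 1}$, only adjacent entries of the reduced matrix
\[M_{nm}(h) := h^{-1}e^{2S/h}\langle d_{\varphi,h}\theta_n^h,\, d_{\varphi,h}\theta_m^h\rangle = \frac{h\, e^{2S/h}}{Z_n Z_m}\int \chi_n'(x)\chi_m'(x)\,e^{-2\varphi(x)/h}\, dx\]
can be non-zero, and a saddle-point evaluation at each $\s\in\uuu^{(1)}$ (where $\varphi(\s)=\sigma_1$ and $\varphi''(\s)=-\nu$) produces the Eyring--Kramers constant $\kappa/\pi=\sqrt{\mu\nu}/\pi$ of \eqref{eq:simpleA0}, together with the tridiagonal Neumann-type boundary structure dictated by the one-dimensional topology. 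Hence $M(h)\to A_0$ as $\epsilon\to 0$.

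Decomposing $v(\tau) = \sum_n c_n(\tau)\theta_n^h + v^\perp(\tau)$, the rescaled semigroup damps $v^\perp$ by $\Vert v^\perp(\tau)\Vert_{L^2} \leq e^{-c_0 e^{2S/h}\tau}\Vert v^\perp(0)\Vert_{L^2}$, and the assumption $\Vert r_\epsilon\Vert_{L^\infty}\to 0$ in \eqref{eq:rho0}, together with the exponential smallness of the saddle contributions to $\indic_{E_n}e^{-\varphi/h}-\chi_n e^{-\varphi/h}$, ensures that the contribution of $v^\perp$ to $\int f\rho\, dx$ is $o(1)$ uniformly in $\tau\geq 0$. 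On the small-spectral subspace, $\dot c = -M(h)c + o(1)\to -A_0\alpha$. Assuming $\varphi_0=0$ without loss of generality, \eqref{eq:rho0} yields $c_n(0) = \beta_n(\mu/(\pi h))^{1/4}(1+o_\epsilon(1))$, so $c_n(\tau)\to \alpha_n(\tau)(\mu/(\pi h))^{1/4}$. For any $f\in C_c^\infty(\R)$, pairing back via $\rho = e^{-\varphi/h}v$ and applying Laplace's method at each $\m_n$,
\[\int f(x)\rho(t,x)\, dx = \sum_n c_n(\tau) Z_n(h)^{-1}\!\int f\chi_n\, e^{-2\varphi/h}\, dx + o(1) \;\longrightarrow\; \sum_n \alpha_n(\tau) f(\m_n),\]
which is exactly \eqref{eq:weakconv}.

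The main obstacle is the quantitative identification of $A_0$: the precise constant $\sqrt{\mu\nu}/\pi$ in the off-diagonal entries of $M(h)$ comes from a delicate saddle-point analysis of $\int\chi_n'\chi_m' e^{-2\varphi/h}\,dx$, where the exponential factor attains its minimum at the saddle but the integrand is bounded by the compact support of $\chi_n',\chi_m'$. Getting both sign and normalization correct requires either the Helffer--Sj\"ostrand weighted-cohomology machinery \cite{HeSj85_01,HeKlNi04_01} or the Bovier--Gayrard--Klein potential-theoretic approach \cite{BoGaKl05_01}, or in 1D an adapted WKB construction near each saddle. The remaining ingredients---the spectral gap, the reduction to the $N$-dimensional linear ODE, and the Laplace-method distributional limit---are routine once the quasimodes are in place.
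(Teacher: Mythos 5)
Your overall architecture matches the paper's: conjugate to the Witten Laplacian, isolate the $N$-dimensional low-lying spectral subspace, reduce the rescaled dynamics to an ODE $\dot\alpha = -A_0\alpha$, and pass to the distributional limit via Laplace's method at the minima. The reduction to finite dimensions, the use of the spectral gap $\gtrsim h$ to kill $v^\perp$ uniformly for $\tau\geq 0$, and the handling of the initial data $r_\epsilon$ are all essentially as in the paper.

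However, there is a genuine gap in the identification of the limiting matrix $A_0$, and it is precisely the step you flag as ``the main obstacle.'' The quantity you propose to compute,
\[
M_{nm}(h) = \frac{h\,e^{2S/h}}{Z_n Z_m}\int \chi_n'(x)\chi_m'(x)\,e^{-2\varphi(x)/h}\,dx,
\]
does not converge to $(A_0)_{nm}$ by any saddle-point evaluation: on the support of $\chi_n'$ the exponent $-2\varphi/h$ has a local \emph{minimum} at the saddle $\s$ (since $\varphi$ is locally maximal there), so the integrand peaks at the endpoints of $\supp\chi_n'$ rather than at $\s$. With the cutoffs \eqref{eq:cutoff} this integral is of size $e^{-2(S-\epsilon)/h}$, larger than the true eigenvalue scale $h e^{-2S/h}$ by a factor $e^{2\epsilon/h}$, and cutoff-dependent. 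Equivalently, the quasimode error $\theta_n^h - \Pi^{(0)}\theta_n^h = \mathcal{O}(e^{-(S-\epsilon')/h})$ is too large for the raw Dirichlet form $\langle d_\varphi\theta_n^h, d_\varphi\theta_m^h\rangle$ to determine the exponentially small matrix $M$ with the needed precision. Invoking Bovier--Gayrard--Klein or Helffer--Sj\"ostrand is a legitimate way to close this gap, but it outsources the one nontrivial computation.

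What the paper does instead, and what you should incorporate, is the supersymmetric pairing. Write $\Delta_\varphi = d_\varphi^* d_\varphi$ and use the intertwining $\Delta_{-\varphi}d_\varphi = d_\varphi\Delta_\varphi$ to realize $M = \Delta_\varphi|_{E^{(0)}}$ as $L^*L$ with $L = d_\varphi|_{E^{(0)}\to E^{(1)}}$. Then compute the entries of $L$ by pairing $d_\varphi f_j^{(0)}$ against the quasimodes $f_i^{(1)}(x) = h^{-1/4}d_i(h)\theta_i(x)e^{(\varphi(x)-S)/h}$ of $\Delta_{-\varphi}$ localized at the saddles: the weights $e^{(\varphi-S)/h}$ and $e^{-\varphi/h}$ cancel \emph{exactly}, giving
\[
\langle f_i^{(1)}, d_\varphi f_j^{(0)}\rangle = h^{1/2}d_i(h)c_j(h)e^{-S/h}\int\theta_i\chi_j'\,dx = h^{1/2}d_i(h)c_j(h)e^{-S/h}(\delta_{i+1,j}-\delta_{i,j}),
\]
with no asymptotic evaluation of an exponential integral needed---the integral is $\pm1$ by the fundamental theorem of calculus, and the constant $\sqrt{\mu\nu}/\pi$ emerges purely from the normalizations $c_j(h)\sim(\mu/\pi)^{1/4}$ and $d_i(h)\sim(\nu/\pi)^{1/4}$. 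This factoring through $L$ is also what controls the error: $\hat L - L = \mathcal{O}(e^{-(S+\alpha')/h})$ is genuinely smaller than $\hat L = \mathcal{O}(e^{-S/h})$, whereas estimating $M$ directly via the Dirichlet form of the $\theta_n^h$ does not achieve the required relative accuracy.
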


The above result is a generalization of Theorem 2.5 in \cite{EvTa16} where the case of 
a double-well is considered and estimates are uniform on compact time intervals only. 
We remark that the equation considered in \cite{EvTa16} has also an additional transverse variable (varying slowly).
A development of the methods presented in this note would also allow having
such variables. Since our goal is to explain
general ideas in a simple setting we do not address this issue here. 

A higher dimensional version of Theorem \ref{th1} is given in Theorem \ref{t:higher}
in \S \ref{s:get_high}.  In this higher dimensional setting, the matrix $ A_0 $ becomes a graph Laplacian for a graph
obtained by taking minima as vertices and saddle points as edges. The same graph Laplacian was used by Landim et al \cite{Lamit} in 
the context of a discrete model of the Kramers--Smoluchowski equation.

Using methods of \cite{EvTa16} and \cite{BoGaKl05_01},
Theorem \ref{t:higher} was also proved by Seo--Tabrizian \cite{Into}, but as the other 
previous papers, without uniformity in time (that is, with convergence uniform for $ t 
\in [ 0 , T ] $).

Here, Theorem \ref{th1} is a consequence of a more precise asymptotic formula 
given in Theorem \ref{th:DynamWitten} formulated using the Witten Laplacian.
Provided that certain topological assumptions are satisfied (see \cite[\S 1.1, \S 1.2]{Mi16}) an analogue of Theorem \ref{th1} in higher dimensions is immediate -- 
see \S \ref{s:get_high} for geometrically interesting examples.

The need for the new results of \cite{Mi16} comes from the fact that
in the papers on the low-lying eigenvalues of the Witten Laplacian \cite{BoGaKl05_01,He88_01,HeKlNi04_01,HeSj85_01,HeHiSj11_01} the authors make assumptions on the relative positions of minima and of
saddle points. These assumptions mean that 
the Arrhenius numbers are distinct and hence potentials for which 
the Kramers--Smoluchowski dynamics \eqref{eq:weakconv} is interesting are excluded.
With this motivation the general case was studied in \cite{Mi16} and to explain 
how the results of that paper can be used in higher dimensions we give 
a self-contained presentation in dimension one.

We remark that we need specially prepared initial data \eqref{eq:rho0} to 
obtain results valid for all times. Also, $ E_n $'s in the 
statement can be replaced by any interval in $ E_n $ containing the minimum.
 Theorem \ref{th:DynamWitten} also shows that
a weaker result is valid for any $ L^2 $ data: suppose that
$ \rho_0 \in L^2_\varphi:= L^2(e^{\varphi(x)/\epsilon^2}dx)$ and that 
\[  \beta_n :=\left(\frac \mu{2\pi\epsilon^2}\right)^{\frac 14}   
\int_{E_n }  \rho_0  (x ) dx . \]
Then, uniformly for $ \tau \geq 0 $, 

\begin{equation}
\label{eq:rhot} 
\begin{gathered} \rho ( t, x ) = \left(\frac \mu{2\pi\epsilon^2}\right)^{\frac 14}   
\sum_{n=1}^N  \alpha_n ((2\epsilon^2)^{-1}e^{-S/\epsilon^2} t ) \indic_{E_n } ( x ) e^{-\varphi/\epsilon^2}+ 
r_\epsilon(t,x) ,\\
\Vert r_\epsilon(t)\Vert_{L^1(dx)}\leq C(\epsilon^{\frac 52}+\epsilon^{\frac 12}e^{-t\epsilon^2})\Vert\rho_0\Vert_{L^2_\varphi} , \ \ \
L^2 _\varphi := L^2 (\R , e^{\varphi(x)/\epsilon^2}dx) .
\end{gathered} 
\end{equation}
where $ \alpha $ solves \eqref{eq:dotal}. 
The proof of \eqref{eq:rhot}
 is given at the end of \S \ref{s:pft}.


\medskip

\noindent
{\sc Acknowledgements.} We would like to thank Craig Evans and 
Peyam Tabrizian for introducing us to the Kramers--Smoluchowski equation, and
Insuk Seo for informing us of reference \cite{Lamit}. 
The research of LM was partially supported by
the European Research Council, ERC-2012-ADG, project number 320845 and by the France Berkeley Fund. MZ acknowledges partial support under the National Science Foundation grant DMS-1500852.

\section{Dimension one}

In this section we assume that the dimension is equal to $d=1$. That allows to present self-contained proofs which indicate the
strategy for higher dimension.

Ordering the sets $E_n$ such that 
$\m_1<\m_2<\ldots<\m_N$ it follows that  for all $n=1,\ldots N-1$ $\bar E_n\cap \bar E_{n+1}=\{\s_n\}$ is a maximum and we assume additionally that there exists $\mu_n,\nu_k>0$ such that for
$ n=1,\ldots,N$ and $ k=1,\ldots,N-1 $, 
\begin{equation}\label{eq:nothessphi}
\varphi''(\m_n)=\mu_n \ \ \text{ and } \ \ \varphi''(\s_k)=-\nu_k.
\end{equation}
Using this notation we define a symmetric $N\times N$ matrix:
$ A_0= (a_{ij} )_{1 \leq i,j \leq N } $, where (with the convention that 
$ \nu_0 = \nu_N = 0 $)
\be\label{eq:defA0}
\begin{split} 
a_{ii} & = \pi^{-1}\mu_j^{\frac12} (\nu_{j-1}^{\frac12} + \nu_j^{\frac12} ) , 
\ \ \ a_{i,i+1} 
= -\pi^{-1} \nu_i^{\frac12} \mu_{i}^{\frac14} \mu_{i+1}^{\frac14}, \ \ 
1 \leq i \leq N-1 ,  
\end{split}
\ee
and $ a_{i,i+k}   = 0$  , for $k > 1$, $ a_{ij} = a_{ji}$. 
The matrix $A_0$ is symmetric positive and the eigenvalue $0$ has multiplicity $1$.
When $ \mu_j $'s and $ \nu_j $'s are all equal our matrix takes the particularly simple form \eqref{eq:simpleA0}.

First, observe that we can assume without loss of generality that $\varphi_0=0$.
Define  the operator appearing on the right hand side of \eqref{eq:KS} by
\[ P: =\partial_x \cdot (\partial_x+\epsilon^{-2}\partial_x\varphi)\]
and denote 
\[ h=2\epsilon^2 . \]
Then, considering $ e^{\pm \varphi/h } $ as a multiplication operator, 
$$P=\partial_x\circ(\partial_x+2h^{-1}\partial_x\varphi)=\partial_x \circ e^{-2\varphi/h}\circ\partial_x\circ  e^{2\varphi/h}$$
and  
$$e^{\varphi/h}\circ P\circ e^{-\varphi/h}=-h^{-2}\Delta_\varphi, \ \ \ 
\Delta_\varphi : =-h^2\Delta+\vert\partial_x\varphi\vert^2-h\Delta\varphi .$$
Hence, $\rho$ is solution of \eqref{eq:KS} if 
 $u(t,x):=e^{\varphi(x)/h}\rho(h^2t,x)$ is a solution of 
\begin{equation}\label{eq:heatwitten}
\partial_tu =-\Delta_\varphi u,  \ \ \ 
u_{\vert t=0}=u_0:=\rho_0e^{\varphi/h} .
\end{equation}
In order to state our result for this equation, we denote
\be\label{eq:defpsin}
\psi_n(x):=c_n(h)h^{-\frac 1 4}\indic_{E_n}(x)e^{-(\varphi-\varphi_0)(x)/h}, \ \ \ \forall n=1,\ldots,N , 
\ee
where $c_n(h)$ is a normalization constant such that $\Vert \psi_n\Vert_{L^2}=1$. The method of steepest descent shows that
\be\label{eq:asympcn}
c_n(h) \sim \sum_{k=0}^\infty c_{n,k}h^k, \ \ 
c_{n,0}=(\mu_n/\pi)^{\frac 14} , \ \  \forall n=1,\ldots N.
\ee
We then define a map $\Psi:\R^N\rightarrow L^2$ by 
\be\label{eq:defPsi}
\Psi(\beta):=\sum_{n=1}^N\beta_n\psi_n, \ \ \ \forall \beta = ( \beta_1 , \ldots , \beta_N ) 
\in\R^N. 
\ee
The following theorem describes the dynamic of the above equation when $h\rightarrow 0$.
\begin{theorem}\label{th:DynamWitten}
There exists $C>0$ and $h_0>0$ such that for all  $\beta\in\R^N$ and all $0 < h < h_0 $, we have
\be\label{eq:DynamWitten1}
\Vert e^{-t\Delta_\varphi}\Psi(\beta)-\Psi(e^{-t\nu_h A}\beta)\Vert_{L^2}\leq Ce^{-\frac 1{Ch}} | \beta | , \ \ \  \forall t\geq 0, 
\ee
where $\nu_h=he^{-2S/h}$, $ S = \sigma_1 - \varphi_0 $, and $A=A(h)$ is a real symmetric positive matrix having a classical expansion
$
A\sim \sum_{k=0}^\infty h^k A_k
$
with
$A_0$ given by \eqref{eq:defA0}. In addition,
\be\label{eq:DynamWitten2}
\Vert e^{-t\Delta_\varphi}\Psi(\beta)-\Psi(e^{-t\nu_h A_0}\beta)\Vert_{L^2}\leq Ch | \beta | 
\ee
uniformly with respect to $t\geq 0$.
\end{theorem}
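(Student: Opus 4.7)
The plan is to reduce the evolution $e^{-t\Delta_\varphi}$ to a finite-dimensional matrix exponential on the low-lying spectral subspace of $\Delta_\varphi$ and then to compare that matrix with $A_0$. The first step is the standard Helffer--Sj\"ostrand spectral picture: using IMS localization, local harmonic oscillator comparison at each minimum $\m_n$, and the growth assumption \eqref{eq:hyp-croissancephi}, one obtains exactly $N$ eigenvalues of $\Delta_\varphi$ in $[0,Ce^{-c/h}]$ (one of them being $0$, with ground state $\Omega_0\propto e^{-\varphi/h}$), while the rest of $\sigma(\Delta_\varphi)$ lies in $[c_0 h,+\infty)$ for some $c_0>0$. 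Let $\Pi_h:=\indic_{[0,c_0h/2]}(\Delta_\varphi)$, a projector of rank $N$. I would then mollify $\psi_n$ into a smooth quasi-mode $\tilde\psi_n:=c_n h^{-1/4}\theta_n e^{-\varphi/h}$, where $\theta_n\in C^\infty$ equals $1$ on $E_n$ outside an $O(\sqrt h)$--neighbourhood of the saddles and satisfies $\theta_n+\theta_{n+1}\equiv 1$ on the transition region near each $\s_n$. Then $\|\tilde\psi_n-\psi_n\|_{L^2}=O(e^{-S/h})$, and since $d_\varphi e^{-\varphi/h}=0$ one gets the clean identity $d_\varphi\tilde\psi_n=c_n h^{3/4}\theta_n' e^{-\varphi/h}$, supported in the transition regions near $\partial E_n$. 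A Laplace estimate then gives $\|d_\varphi\tilde\psi_n\|_{L^2}=O(h^{1/2}e^{-S/h})$, and the spectral gap yields $\|(1-\Pi_h)\tilde\psi_n\|_{L^2}=O(e^{-S/h})$, so that $\{\Pi_h\tilde\psi_n\}_{n=1}^N$ is an almost-orthonormal basis of $\Ran(\Pi_h)$.

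The heart of the argument is the matrix element computation
\[
B_{mn}:=\langle\Delta_\varphi\tilde\psi_n,\tilde\psi_m\rangle=\langle d_\varphi\tilde\psi_n,d_\varphi\tilde\psi_m\rangle=c_n c_m h^{3/2}\!\int\theta_n'\theta_m' e^{-2\varphi/h}\,dx.
\]
The partition-of-unity choice forces $B_{mn}=0$ for $|m-n|\geq 2$ (disjoint supports of $\theta_n'$ and $\theta_m'$) and gives $B_{n,n+1}=-c_n c_{n+1}h^{3/2}\int(\theta_n')^2 e^{-2\varphi/h}\,dx$ with integrand concentrated near $\s_n$. A steepest-descent expansion using $\varphi(\s_k+y)=\sigma_1-\tfrac{\nu_k}{2}y^2+O(y^3)$ together with the asymptotics \eqref{eq:asympcn} produces $B=\nu_h A$ with $A=A_0+O(h)$ and $A_0$ as in \eqref{eq:defA0}; iterating the Laplace expansion yields the full classical expansion $A\sim\sum h^k A_k$. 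Estimate \eqref{eq:DynamWitten1} then follows by finite-dimensional reduction: the restriction of $e^{-t\Delta_\varphi}$ to $\Ran(\Pi_h)$ is, in the basis $\{\Pi_h\tilde\psi_n\}$, conjugate to $e^{-t\nu_h A}$ modulo exponentially small Gram-matrix corrections, and the off-subspace component $(1-\Pi_h)\Psi(\beta)=O(e^{-c/h})|\beta|$ is uniformly contracted since $\Delta_\varphi\geq 0$.

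To pass from \eqref{eq:DynamWitten1} to \eqref{eq:DynamWitten2}, one must bound $\|e^{-t\nu_h A}-e^{-t\nu_h A_0}\|$ uniformly in $t\geq 0$. Both matrices are positive semi-definite with simple zero eigenvalue, and their kernels coincide to leading order (both spanned by $(\mu_n^{-1/4})_{n=1}^N$, a fact forced by $\Omega_0\in\ker\Delta_\varphi$). The kernel projectors therefore differ by $O(h)$, and a Duhamel argument on the orthogonal complement of the kernels---where both matrices have spectral gap uniformly bounded below---yields the uniform $O(h)$ estimate; combined with \eqref{eq:DynamWitten1} this gives \eqref{eq:DynamWitten2}. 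The main obstacle I expect is the Laplace-method bookkeeping in the matrix computation: the precise coefficients and tri-diagonal structure of $A_0$ reflect the combinatorics of which wells share a saddle, together with the specific Gaussian factors $\nu_k^{1/2}$ and the normalizations $(\mu_n/\pi)^{1/4}$. A secondary technical point is the uniform-in-time comparison in the last step; the leading-order matching of kernels is what prevents a logarithmic loss that would otherwise appear.
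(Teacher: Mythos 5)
Your reduction to a finite-dimensional matrix exponential on $\Ran(\Pi_h)$ is the same as the paper's, and your treatment of \eqref{eq:DynamWitten2} from \eqref{eq:DynamWitten1} via the kernel-matching argument is also essentially identical. The gap is in the computation of the effective matrix $A$. You propose to compute it directly as the quadratic form
$B_{mn}=\langle d_\varphi\tilde\psi_n,d_\varphi\tilde\psi_m\rangle$ on mollified quasi-modes $\tilde\psi_n=c_nh^{-1/4}\theta_n e^{-\varphi/h}$ with $\theta_n$ transitioning over an $O(\sqrt h)$ scale near the saddles. This does not determine $A_0$: near $\s_k$ one has $e^{-2\varphi/h}\approx e^{-2\sigma_1/h}e^{\nu_k y^2/h}$, so the weight is a \emph{growing} Gaussian on the transition region, and the Laplace integral does not localize at the saddle. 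Writing $\theta_n(y)=F(y/\sqrt h)$, one finds
$B_{n,n+1}=-c_nc_{n+1}\,h\,e^{-2S/h}\int F'(u)^2e^{\nu_k u^2}\,du\,(1+o(1))$,
and $\int F'(u)^2e^{\nu_k u^2}du\geq(\nu_k/\pi)^{1/2}$ by Cauchy--Schwarz, with equality \emph{only} when $F'$ is Gaussian (i.e.\ $F$ an error-function profile, which is not compactly supported). For a generic cutoff the leading coefficient is strictly larger than the correct one, so $B$ cannot equal $M=\Delta_\varphi|_{E^{(0)}}$ up to a relative error $O(h)$. Equivalently, the off-subspace correction $\langle\Delta_\varphi(1-\Pi_h)\tilde\psi_n,(1-\Pi_h)\tilde\psi_m\rangle$ is $O(he^{-2S/h})$, the \emph{same} order as $M_{mn}\sim\nu_h=he^{-2S/h}$, because $\|d_\varphi(1-\Pi_h)\tilde\psi_n\|$ is comparable to $\|d_\varphi\tilde\psi_n\|$; nothing in your argument controls the relative error.

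The paper circumvents this precisely by exploiting the supersymmetry more aggressively than you do: rather than computing $M$ as a quadratic form, it computes the matrix $L$ of $d_\varphi\colon E^{(0)}\to E^{(1)}$ and then forms $M=L^*L$. The key is to pair against quasi-modes $f_k^{(1)}=h^{-1/4}d_k\theta_k e^{(\varphi-S)/h}$ for $\Delta_{-\varphi}$, localized at the saddles. In $\hat\ell_{ij}=\langle f_i^{(1)},d_\varphi f_j^{(0)}\rangle$ the exponentials $e^{(\varphi-S)/h}$ and $e^{-\varphi/h}$ cancel \emph{exactly}, leaving a cutoff-independent telescoping integral $\int\theta_i\chi_j'\,dx=-\delta_{ij}+\delta_{i+1,j}$; there is no stationary-phase bookkeeping at all, only the explicit asymptotics of the normalizing constants $c_j,d_i$. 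Crucially, the error $g_i^{(1)}-f_i^{(1)}=O_{L^2}(e^{-\alpha/h})$ has $\alpha>0$ determined by the (fixed) diameter $\delta_0$ of the wells and \emph{not} by the cutoff scale $\epsilon$; taking $\epsilon\ll\alpha$ then yields $L=\hat L+\ooo(e^{-(S+\alpha')/h})$ with $\alpha'>0$, an error exponentially smaller than $\hat L\sim h^{1/2}e^{-S/h}$, which is what ultimately gives the full classical expansion $A\sim\sum h^kA_k$. To repair your argument you would have to replace the ad hoc smooth cutoffs by the Helffer--Sj\"ostrand WKB quasi-modes with the correct error-function profile at the saddles, or, more simply, adopt the paper's $L^*L$ factorization.

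(A minor aside: your identification of $\ker A_0$ as spanned by $(\mu_n^{-1/4})_n$ is in fact the correct one in the variable-$\mu_n$ case; Lemma~\ref{cor:spectM}'s $(1,\dots,1)$ is the special constant-$\mu$ form. This does not affect the argument either way.)
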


We first show how 
\begin{proof}[Theorem \ref{th:DynamWitten} implies Theorem \ref{th1}]
First recall that we assume here $\mu_n=\mu$ for all $n=1,\dots N$ and $\nu_k=\nu$ for all $k=1,\ldots N-1$.
Suppose  that $\rho$ is the solution to \eqref{eq:KS} with 
$\rho_0$ as in Theorem \ref{th1}. Then $u(t,x):=e^{\varphi(x)/h}\rho(h^2t,x)$ is a solution of \eqref{eq:heatwitten}, that is
$u(t)=e^{-t\Delta_\varphi}u_0$ with 
\begin{equation}
\begin{split}
u_0&=\rho_0e^{\varphi/2\epsilon^2}
=\left(\frac \mu{2\pi\epsilon^2}\right)^{\frac 12} \left(\sum_{n=1}^N\beta_n\indic_{E_n} +r_\epsilon\right)e^{-\varphi/2\epsilon^2}\\ 
&=\left(\frac \mu{\pi h}\right)^{\frac 12}\left( \sum_{n=1}^N\beta_n \indic_{E_n} 
+ r_h\right) e^{-\varphi/h}
\end{split}
\end{equation}
Since, $c_n(h)=(\mu/\pi)^{\frac 14}+\ooo(h)$ it follows that
$$
u_0=(\mu/\pi h)^{\frac 14} \Psi(\beta)+\tilde r_h, \ \ \ 
\tilde r_h= 
\left(\ooo(h^{\frac 12})+h^{-\frac 12}r_h  \right)e^{-\varphi/h} . $$
Since $h^{-\frac 12}e^{-\varphi/h}=\ooo_{L^1}(1)$, 
we have 
$\tilde r_h\rightarrow 0$ in $L^1$ when $h\rightarrow 0$. 
Hence, it follows from \eqref{eq:DynamWitten2} (Theorem \ref{th:DynamWitten}) that
\begin{equation*}
\begin{split}
\rho(h^2t,x)&=e^{-\varphi(x)/h}u(t,x)=e^{-\varphi(x)/h}e^{-t\Delta_\varphi}\left((\mu/\pi h)^{\frac 14} \Psi(\beta)+\tilde r_h\right)\\
&=e^{-\varphi(x)/h}\left((\mu/\pi h)^{\frac 14} \Psi(e^{-t\nu_h A_0}\beta)+e^{-t\Delta_\varphi}\tilde r_h+\ooo_{L^2}(h)\right)
\end{split}
\end{equation*}
With the new time variable $s=t \nu_h$, we obtain
\begin{equation}\label{eq:limKS1}
\rho(she^{2S/h},x)=e^{-\varphi(x)/h}\left((\mu/\pi h)^{\frac 14}\Psi(e^{-s A_0}\beta)+e^{-t\Delta_\varphi}\tilde r_h+\ooo_{L^2}(h)\right)
\end{equation}
and denoting  $\alpha(s)=e^{-s A_0}\beta$, we get
$$
e^{-\varphi(x)/h}(\mu/\pi h)^{\frac 14}\Psi(e^{-s A_0}\beta)= (\mu/\pi )^{\frac 14}\sum_{n=1}^N  \alpha_n(t)h^{-\frac 12}c_n(h)\chi_n(x)e^{-2\varphi(x)/h}.
$$
On the other hand, 
$
h^{-\frac 12}\chi_n(x)e^{-2\varphi(x)/h}\longrightarrow({\pi}/\mu)^{\frac 12}\delta_{x=m_n}
$, as $  h \to 0 $, 
in the sense of distributions.
Since, $c_n(h)=(\mu/\pi)^{\frac 14}+\ooo(h)$, it follows that
\be\label{eq:cvdistrib1}
e^{-\varphi/h}(\mu/{\pi h})^{\frac 14}\Psi(e^{-s A_0}\beta)\longrightarrow    \sum_{n=1}^N\alpha_n(t)\delta_{x=m_n}
\ee
when $h\rightarrow0$. 
Moreover, since  $e^{-t\Delta_\varphi}$ is bounded  by $1$ on $L^2$, then
$$\Vert h^{-\frac 12} e^{-\varphi/h}e^{-t\Delta_\varphi}(r_h e^{-\varphi/h})\Vert_{L^1}
\leq \Vert r_h\Vert_{L^\infty}\Vert  h^{-\frac 14} e^{-\varphi/h}\Vert_{L^2}^2\leq C\Vert r_h\Vert_{L^\infty}
$$
and recalling that $ r_h\rightarrow 0$ in $L^\infty$,
 we see that
\be\label{eq:cvdistrib2}
e^{-\varphi(x)/h}(e^{-t\Delta_\varphi}\tilde r_h+\ooo_{L^2}(h))\longrightarrow 0
\ee
in the sense of distributions.
Inserting \eqref{eq:cvdistrib1} and  \eqref{eq:cvdistrib2} into \eqref{eq:limKS1} and recalling that $h=2\epsilon^2$, we obtain \eqref{eq:weakconv}.
\end{proof}

\subsection{Witten Laplacian in dimension one} 
The Witten Laplacian is particularly simple 
in dimension one but one can already observe features which play a crucial role 
in general study.
For more information we refer to 
\cite[\S 11.1]{CyFrKiSi87_01} and \cite{He88_01}.

We first consider  $\Delta_\varphi$  acting on $ C_c^\infty (\mathbb R ) $ and recall a supersymmetric structure  which is the starting point of our analysis:
\begin{equation}\label{eq:susy1D}
\Delta_\varphi=d_\varphi^*\circ d_\varphi
\end{equation}
with $d_\varphi=e^{-\varphi/h}\circ h\partial_x\circ e^{\varphi/h}=h\partial_x+\partial_x\varphi$ and 
$d_\varphi^*=-h\partial_x+\partial_x\varphi=-d_{-\varphi}$.
From this square structure, it is clear that $\Delta_\varphi$ is non negative
and that we can use the Friedrichs extension to define a self-adjoint operator $ \Delta_\varphi $ with
domain denoted $D(\Delta_\varphi) $. Moreover, it follows from \eqref{eq:hyp-croissancephi} that there exists $c_0,h_0>0$ such that for $ 0 < h < h_0 $, 
\be\label{eq:locspecess}
\sigma_{\rm{ess}}(\Delta_\varphi)\subset [c_0,+\infty) . 
\ee
Therefore, $\sigma(\Delta_\varphi)\cap [0,c_0)$ consists of 
 eigenvalues of finite multiplicity and with no accumulation points except possibly 
$c_0$.
 
The following proposition gives a preliminary description of the low-lying eigenvalues.
\begin{proposition}\label{prop:roughestimspectre}
There exist $\varepsilon_0,h_0>0$ such that  for any $h\in (0,h_0]$, 
$\Delta_{\pm \varphi} $ has exactly $N$ eigenvalues $0 \leq \lambda^\pm_1\leq\lambda_2^\pm\ldots\leq\lambda_N^\pm$ in the interval $[0, \epsilon_0 h ]$. Moreover, 
for any $ \epsilon > 0 $ there exists $ C $ such that 
 \begin{equation}
 \lambda_n^\pm(h)\leq Ce^{-(S-\epsilon)/h},
 \end{equation}
 where $S = \sigma_1 - \varphi_0 $.
\end{proposition}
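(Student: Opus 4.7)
The proof will follow the standard semiclassical scheme for Witten Laplacians: first construct $N$ quasimodes localized at the wells to obtain the exponentially small upper bound via the min-max principle, then use IMS localization together with harmonic approximation at each critical point to show that the $(N+1)$-st eigenvalue is $\geq \epsilon_0 h$, thereby isolating the low-lying eigenvalues from the rest of the spectrum.

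For the upper bound on $\lambda_n^+$ I would exploit the supersymmetric factorization \eqref{eq:susy1D}. For each well $E_n$, choose a cutoff $\chi_n \in C_c^\infty(E_n)$ equal to $1$ on $\{\varphi \leq \sigma_1 - 2\delta\} \cap E_n$ and supported in $\{\varphi < \sigma_1 - \delta\} \cap E_n$, and set $\tilde\psi_n := \chi_n e^{-(\varphi-\varphi_0)/h}$. The identity $d_\varphi = e^{-\varphi/h} \circ h\partial_x \circ e^{\varphi/h}$ yields directly
\bes
d_\varphi \tilde\psi_n = h\chi_n' e^{-(\varphi-\varphi_0)/h},
\ees
and since $\chi_n'$ is supported where $\varphi - \varphi_0 \geq S - \delta$, one gets $\|d_\varphi \tilde\psi_n\|_{L^2}^2 \leq Ch^2 e^{-2(S-\delta)/h}$, while Laplace asymptotics near $\m_n$ give $\|\tilde\psi_n\|_{L^2}^2 \sim (\pi h/\mu_n)^{1/2}$. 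The Rayleigh quotient is thus $O(h^{3/2}e^{-2(S-\delta)/h}) \leq Ce^{-(S-\epsilon)/h}$ once $\delta$ is chosen small. Since the $\tilde\psi_n$ have disjoint supports they are orthogonal, and min-max delivers $N$ eigenvalues of $\Delta_\varphi$ obeying the claimed bound.

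For the lower bound on $\lambda_{N+1}^+$, I would apply IMS localization with a smooth quadratic partition $\sum_j \theta_j^2 = 1$ made of one piece supported on $\{|\partial_x\varphi| \geq c > 0\}$, one piece near each minimum $\m_n$, and one piece near each saddle $\s_k$:
\bes
\<\Delta_\varphi u, u\> = \sum_j \<\Delta_\varphi(\theta_j u), \theta_j u\> - h^2 \sum_j \|\theta_j' u\|^2.
\ees
On the first piece, $|\partial_x\varphi|^2 \geq c^2$ alone forces $\Delta_\varphi \geq c^2/2$ for small $h$. Near each saddle, the harmonic model $-h^2\partial_x^2 + \nu_k^2(x-\s_k)^2 + h\nu_k$ has lowest eigenvalue $2h\nu_k$. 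Near each minimum, the harmonic model $-h^2\partial_x^2 + \mu_n^2(x-\m_n)^2 - h\mu_n$ has spectrum $\{2h\mu_n j\}_{j\geq 0}$ with Gaussian ground state $\phi_n$, so the gap $2h\mu_n$ is available provided the $\phi_n$-direction is excluded. Restricting to test vectors $u$ orthogonal to the span of $\tilde\psi_1,\ldots,\tilde\psi_N$, and using that $\tilde\psi_n$ agrees with a normalized copy of $\phi_n$ up to exponentially small $L^2$ error, each local minimum contribution is $\geq ch\|\theta_n u\|^2$. Summing and absorbing the $O(h^2)$ IMS remainder gives $\<\Delta_\varphi u, u\> \geq \epsilon_0 h \|u\|^2$, so $\lambda_{N+1}^+ \geq \epsilon_0 h$ by max-min.

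The main technical obstacle is the last transfer step: showing that orthogonality to the global quasimodes $\tilde\psi_n$ implies approximate orthogonality of each localized piece $\theta_n u$ to the harmonic ground state $\phi_n$, uniformly in $u$. This requires Agmon-type exponential decay estimates for both $\tilde\psi_n$ and $\phi_n$ outside a small neighborhood of $\m_n$, in the spirit of \cite{HeSj85_01,He88_01,CyFrKiSi87_01}, and some care with the Laplace asymptotics matching the normalizations. The case $\Delta_{-\varphi}$ is handled by the same scheme: using the dual factorization $\Delta_{-\varphi} = d_\varphi d_\varphi^*$ together with the supersymmetric intertwining of the nonzero spectrum of $\Delta_{\pm\varphi}$, the quasimode construction and IMS analysis transfer directly, with the roles of minima and saddles exchanged in the harmonic analysis.
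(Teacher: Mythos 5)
Your proposal follows the same two-part scheme as the paper — exponentially good quasimodes for the upper bound, IMS localization with harmonic approximation for the lower bound — and the upper-bound part is essentially identical to the paper's (via the supersymmetric factorization $\Delta_\varphi = d_\varphi^* d_\varphi$ and the quasimodes $\chi_n e^{-(\varphi-\varphi_0)/h}$). The genuine difference is in how the lower bound is organized, and this difference matters.

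The paper decouples the two roles that you conflate. For the spectral counting, it does \emph{not} take $V$ to be the span of the quasimodes $\tilde\psi_n$. Instead it sets $V=\mathrm{span}\{\chi_\m e_\m\}$, where $e_\m$ is the exact ground state of the local harmonic model $H_\m$ and $\chi_\m$ is a cutoff on the scale $\sqrt{Mh}$. With this choice, $u\perp\chi_\m e_\m$ gives $\chi_\m u\perp e_\m$ \emph{exactly}, and the harmonic spectral gap $c_0 h$ applies with no transfer step. This entirely avoids what you correctly identify as your ``main technical obstacle.'' Moreover, the quantitative statement in that obstacle as you phrase it is actually false: $\tilde\psi_n$ agrees with (a normalized copy of) the harmonic ground state only up to relative error $\mathcal O(\sqrt h)$ in $L^2$, not exponentially small — the cubic remainder in $\varphi(x)-\varphi_0 = \tfrac{\mu_n}{2}(x-\m_n)^2 + \mathcal O(|x-\m_n|^3)$ contributes $e^{\mathcal O(\sqrt h)}$ on the Gaussian scale $|x-\m_n|\sim\sqrt h$. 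That $\mathcal O(\sqrt h)$ error is in fact harmless (the spurious projection onto $e_\m$ is $\mathcal O(\sqrt h)\|u\|$, losing only an $\mathcal O(h^2)\|u\|^2$ term against the leading $c_0 h\|\chi_\m u\|^2$), but the claim as stated would not survive refereeing, and the paper's choice of $V$ makes the whole issue disappear.

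Two smaller remarks. First, you should make the cutoff scale explicit ($\sqrt{Mh}$ around each critical point), since the balance between the IMS remainder $\mathcal O(h/M)$ and the Taylor remainder $\mathcal O((Mh)^{3/2})$ is what forces the choice of $M$ large and then $h$ small. Second, your inclusion of separate IMS pieces at the saddles, with the model operator $-h^2\partial_x^2+\nu_k^2(x-\s_k)^2+h\nu_k$ and ground energy $2h\nu_k$, is more explicit than what the paper writes (the paper lumps saddles into the $\chi_\infty$ region and asserts $|\nabla\varphi|^2 - h\Delta\varphi \geq c_1 M h$ there, which near a saddle really gives a bound of order $h$ independent of $M$, not $Mh$); your version is closer to the classical CFKS/Helffer--Sj\"ostrand treatment and scales better to higher dimensions. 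Overall the proposal is sound modulo the correction of the ``exponentially small'' claim, but the paper's choice of exclusion space is the cleaner route and you should adopt it.
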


\noindent
{\bf Remark.} 
The proof applies to any $ \varphi $ which satisfies 
the first {\em two} inequalities in 
 \eqref{eq:hyp-croissancephi}. If one assumes additionally that 
$\varphi(x)\geq C\vert x\vert $ for $ |x| $ large, then $e^{-\varphi/h} \in 
D(\Delta_\varphi)$. 
 Since $d_\varphi(e^{-\varphi/h})=0$  it follows that $ \lambda_0 ^+= 0 $.

\medskip

\begin{proof}  This is proved in  \cite[Theorem 11.1]{CyFrKiSi87_01} with 
$ h^{\frac32} $ in place of $ \epsilon_0 h $. The proof applies in any 
dimension and we present it in that greater generality for $ \varphi $ satisfying
\[ \vert\partial \varphi(x)\vert\geq\frac 1 C, \ \ \ \ \vert \partial^2_{x_i x_j} \varphi\vert\leq C\vert\partial\varphi\vert^2 .\]

 The fact that there exists at least $N$ eigenvalues in the interval 
$[0,C e^{-(S- \epsilon )/h}]$ is a direct consequence of the existence of $N$ linearly independent quasi-modes  
-- see Lemma \ref{lem:propf0} and \eqref{eq:estimdeltaphifn0} below.

To show that $ N $ is the exact number of eigenvalues in 
$ [ 0 , \epsilon_0 h ) $ 
it suffices to find a $N$ dimensional vector space $V$ and $\varepsilon_0>0$ 
such that  the operator 
$\Delta_\varphi$ is bounded from below by $\varepsilon_0 h$ on $V^\bot$
-- see for instance \cite[Theorem C.15]{Zw12_01}.

To find $ V$ we introduce a family of harmonic oscillators associated to 
minima $\m\in\uuu^{(0)}$ and obtained by replacing $\varphi$ by its harmonic approximation 
in the expression for $\Delta_\varphi$:
$$
H_\m :=-h^2\Delta+\vert\varphi''(\m)(x-\m)\vert^2-h\Delta\varphi(\m), \ \ 
\m \in \mathcal U^{(0)} .
$$
The spectrum of this operator is known explicitly, see \cite[Sect 2.1]{He88_01} with the simple eigenvalue $ 0 $ at the bottom. We denote by $e_\m$ the
normalized eigenfunction, $ H_\m e_\m = 0 $. The other eigenvalues of $H_\m$ are bounded from below by $c_0h$ for some $c_0>0$.

Let $\chi\in C^\infty_c(\R^d; [0,1] )$ be equal to $ 1 $ near $ 0 $ 
and satisfy $ ( 1 - \chi^2 )^{\frac12} \in C^\infty ( \R^d ) $. 
We define $\chi_\m(x)=\chi((x-\m)/\sqrt{Mh})$ where  $M>0$  will be chosen later. For $h$ small enough, the functions $\chi_\m$ have disjoint supports and hence the function $\chi_\infty$ defined by 
$1-\chi_\infty^2=\sum_{\m\in\uuu^{(0)}}\chi_\m^2$ is smooth. We define the $N$-dimensional vector space
$$
V= {\rm{span}} \, \{ \chi_\m e_\m, \ \m\in\uuu^{(0)} \} . 
$$
The proof is completed if we show that there exist $\varepsilon_0,h_0>0$ such that 
\be\label{eq:minorDeltaphi}
\<\Delta_\varphi u,u\>\geq \varepsilon_0 h\Vert u\Vert^2,\; \ \  
\forall u\in V^\bot \cap D ( \Delta_\varphi ) , \ \ \ \forall h \in]0, h_0] .
\ee
To establish \eqref{eq:minorDeltaphi} we use 
the following localization formula  the verification of which is left to the reader 
(see \cite[Theorem 3.2]{CyFrKiSi87_01}):
$$
\Delta_\varphi=
\sum_{\m\in\uuu^{(0)}\cup\{\infty\}} \chi_\m\circ \Delta_\varphi \circ\chi_\m-h^2\sum_{\m\in\uuu^{(0)}\cup\{\infty\}} \vert\nabla\chi_\m\vert^2.
$$
Since, $\nabla\chi_\m=\ooo(({M h})^{-\frac12})$, this implies, for $ u \in D ( \Delta_\varphi ) $, that 
\be\label{eq:minorDelta0}
\<\Delta_\varphi u,u\>=\<\Delta_\varphi \chi_\infty u,\chi_\infty u\>+\sum_{\m\in\uuu^{(0)}}\<\Delta_\varphi \chi_\m u,\chi_\m u\>
+\ooo(hM^{-1}\Vert u\Vert^2).
\ee
On the support of $\chi_\infty$ we have $\vert\nabla\varphi\vert^2-h\Delta\varphi\geq(1-\ooo(h))\vert\nabla\varphi\vert^2\geq  c_1M h$ for some $c_1>0$, and hence
\be\label{eq:minorDelta1}
\<\Delta_\varphi \chi_\infty u,\chi_\infty u\>\geq Mc_1h\Vert \chi_\infty u\Vert^2
\ee
On the other hand, near any $\m\in\uuu^{(0)}$,   $\vert \nabla\varphi(x)\vert^2=\vert\varphi''(\m)(x-\m)\vert^2+\ooo(|x-\m|^3)$ 
and $\varphi''(x)=\varphi''(\m)+\ooo(|x-\m|)$. Since on the support of $\chi_\m$ we have
$ | x - \m |\leq \sqrt{Mh} $, it follows that
\begin{equation}
\label{eq:chimu}
\<\Delta_\varphi \chi_\m u,\chi_\m u\>=\<H_\m \chi_\m u,\chi_\m u\>+\ooo((Mh)^{\frac 32}).
\end{equation}
We now assume that  $u \in D ( \Delta_\varphi ) $ 
is orthogonal to $\chi_\m e_\m$ for all $\m$. Then $\chi_\m u$ is orthogonal to $e_\m$. Since the spectral gap  of $ H_\m $ is bounded from below by $ c_0 h $, \eqref{eq:chimu} shows that 
\be\label{eq:minorDelta2}
\<\Delta_\varphi \chi_\m u,\chi_\m u\>\geq c_0 h\Vert\chi_\m u\Vert^2 + \ooo((Mh)^{\frac 32}\Vert u\Vert^2) , \ \ \ \forall \m \in \mathcal U^{(0)} .
\ee
Combining this with  \eqref{eq:minorDelta0},  \eqref{eq:minorDelta1} and \eqref{eq:minorDelta2} gives
\begin{equation*}
\begin{split}
\<\Delta_\varphi u,u\>&\geq c_0 h\sum_{\m\in\uuu^{(0)}\cup\{\infty\}}\Vert\chi_\m u\Vert^2+\ooo(hM^{-1}\Vert u\Vert^2)+\ooo((Mh)^{\frac 32}\Vert u\Vert^2)\\
&\geq  c_0h\Vert u\Vert^2+\ooo(hM^{-1}\Vert u\Vert^2)+\ooo((Mh)^{\frac 32}\Vert u\Vert^2).
\end{split}
\end{equation*}
Taking $M$ large enough completes the proof of \eqref{eq:minorDeltaphi}.\end{proof}

We denote by $E^{(0)}$ the
subspace spanned by eigenfunctions of these low lying eigenvalues and  by
 \begin{equation}\label{eq:definPi0}
 \Pi^{(0)} :=\indic_{[0,\varepsilon_0h ]}(\Delta_\varphi)
 \end{equation}
the spectral projection onto $E^{(0)}$. This projector is 
expressed by the standard contour integral
\begin{equation} \label{eq:cauchyPi0}
\Pi^{(0)} = \frac{1}{2\pi i} \int_{\partial B ( 0 , \delta \varepsilon_0h ) }
(z-\Delta_\varphi)^{-1} d z 
\end{equation}

In our analysis, we will also need the operator $\Delta_{-\varphi}$, noting  
that in dimension one $\Delta_{-\varphi}$ is the Witten Laplacian on $1$-forms. 
Since $-\varphi$ has exactly $N-1$ minima (given by the $N-1$ maxima of $\varphi$), it follows from Proposition \ref{prop:roughestimspectre} that there exists $\varepsilon_1>0$ such that 
$\Delta_{-\varphi}$ has $N-1$ eigenvalues in $[0,\varepsilon_1 h]$ and that these eigenvalues 
are actually exponentially small. Observe that because of the condition $\varphi(x)\geq C\vert x\vert$ at infinity, the function $e^{\varphi/h}$ is not square integrable. Consequently, unlike in the case of $ \Delta_{\varphi } $, we cannot conclude that 
the lowest eigenvalue is equal to $0$

We denote by $E^{(1)}$ the
subspace spanned by eigenfunctions of these low-lying eigenfunctions of $\Delta_{-\varphi}$ and  by
$\Pi^{(1)}$ the corresponding projector onto $E^{(1)}$, 
\begin{equation}\label{eq:definPi1}
 \Pi^{(1)}=\indic_{[0,\varepsilon_1h]}(\Delta_{-\varphi}).
 \end{equation}
Similarly to \eqref{eq:cauchyPi0}, we have
 \begin{equation} \label{eq:cauchyPi1}
\Pi^{(1)} = \frac{1}{2\pi i} \int_{\partial B ( 0 , \delta \varepsilon_1 h ) } (z-\Delta_{-\varphi})^{-1} d z , 
\end{equation}
for any  $0 < \delta <1$.

\subsection{Supersymmetry}  The key point in the analysis is the following intertwining relations which follows directly from \eqref{eq:susy1D}
\begin{equation}\label{eq:intertwin}
\Delta_{-\varphi} \circ d_\varphi=d_\varphi\circ \Delta_\varphi
\end{equation}
and its adjoint relation
\begin{equation}\label{eq:intertwinadj}
d_\varphi^*\circ\Delta_{-\varphi}= \Delta_\varphi\circ d_\varphi^*.
\end{equation}
 From these relations we deduce that $d_\varphi(E^{(0)})\subset E^{(1)} $ and $d_\varphi^*(E^{(1)})\subset E^{(0)} $. Indeed, suppose that 
 $\Delta_\varphi u=\lambda u$, 
 with $u\neq 0$ and $\lambda\in[0,\varepsilon_0 h]$. Then, we see from \eqref{eq:intertwin} that
 $$
 \Delta_{-\varphi}(d_\varphi u)=d_\varphi( \Delta_\varphi u)=\lambda d_\varphi u.
 $$
 Therefore, either $d_\varphi u$ is null and obviously belongs to $E^{(1)}$ or $d_\varphi u\neq 0$ and hence $d_\varphi u$ is an eigenvector of $\Delta_{-\varphi}$ associated to $\lambda\in[0,\varepsilon_0 h]$. This proves the first statement. The inclusion $d_\varphi^*(E^{(1)})\subset E^{(0)} $ is obtained by similar arguments.
 
 By definition, the operator $\Delta_\varphi$ maps $E^{(0)}$ into itself and we can consider its restriction to $E^{(0)}$.  From the above discussion we know also that $d_\varphi(E^{(0)})\subset E^{(1)} $ and $d_\varphi^*(E^{(1)})\subset E^{(0)} $. Hence we consider
$ \lll=(d_\varphi)_{\vert E^{(0)}\rightarrow E^{(1)}} $ and $  \lll^*=(d_\varphi^*)_{\vert E^{(1)}\rightarrow E^{(0)}}$.
When restricted to $E^{(0)}$,  the structure equation \eqref{eq:susy1D} becomes
 \begin{equation}\label{eq:susymatrice}
 \mmm=\lll^*\lll \ \ \ \text{ with }\ \ \ \ \ \mmm := \Delta_\varphi |_{ E^{(0)} } , \ \ \
 \lll := (d_\varphi)|_{ E^{(0)} \to E^{(1)}} . 
 \end{equation}
 
 \subsection{Quasi-modes for $\Delta_\varphi$}
Let $\delta_0=\inf\{\diam(E_n),\,n=1,\ldots,N\}$ and let $\epsilon>0$ be small with respect to $\delta_0$. For all $n=1,\ldots,N$, let $\chi_n$ be smooth cut-off functions such that 
\begin{equation}
\label{eq:cutoff} \left\{
\begin{array}{c}
0\leq \chi_n\leq 1,\\
\supp(\chi_n)\subset \{x\in E_n,\,\varphi(x)\leq\sigma_1- \epsilon \}\\
\chi_n=1\text{ on } \{x\in E_n,\,\varphi(x)\leq \sigma_1- 2\epsilon\},
\end{array}
\right. 
\end{equation}
where $ \epsilon > 0$ will be chosen small (in particular much smaller than 
$ \delta_0 $ in \eqref{eq:delta0}).
Consider now the family of approximated eigenfunctions defined by 
\begin{equation}\label{eq:definfn0}
f_n^{(0)}(x)=h^{-\frac 14}c_n(h)\chi_n(x) e^{-\varphi(x)/h} , \ \ 
\ \Vert f_n^{(0)}\Vert_{L^2}=1 , 
\end{equation}
where $c_n(h)=\varphi''(m_n)^{\frac 14}\pi^{-\frac 14}+\ooo(h)$. 
 We introduce the projection of these quasi-modes onto the eigenspace space $E^{(0)}$:
\begin{equation}\label{eq:defingn0}
g_n^{(0)}:=\Pi^{(0)}f_n^{(0)}.
\end{equation}
\begin{lemma}\label{lem:propf0}
The approximate eigenfunctions defined by \eqref{eq:definfn0}
 satisfy 
$$ \<f_n^{(0)},f_m^{(0)}\>=\delta_{n,m}, \ \ \   \forall n,m=1,\ldots ,N,
$$
and 
$$d_\varphi f_n^{(0)}=\ooo_{L^2}(e^{-(S-\epsilon)/h}), \ \ \ \ 
g_n^{(0)}-f_n^{(0)}=\ooo_{L^2}(e^{-(S-\epsilon')/h})$$
for any $\epsilon'>\epsilon$.
\end{lemma}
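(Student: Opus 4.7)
\medskip

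\noindent\textbf{Proof plan.} My plan is to treat the three claims in sequence, with the first two as direct computations and the third as the conceptual core, where the supersymmetric structure enters.

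The orthonormality is essentially by construction: $\supp\chi_n\subset E_n$ and the wells $E_n$ are pairwise disjoint, so $\<f_n^{(0)},f_m^{(0)}\>=0$ for $n\neq m$. The diagonal normalization $\Vert f_n^{(0)}\Vert_{L^2}=1$ defines $c_n(h)$, and applying the Laplace method at the unique non-degenerate minimum $\m_n$ (using $\varphi_0=0$ and $\chi_n\equiv 1$ near $\m_n$) to $\int\chi_n^2 e^{-2\varphi/h}\,dx$ recovers the expansion \eqref{eq:asympcn}.

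For the bound on $d_\varphi f_n^{(0)}$ the key algebraic identity is $d_\varphi(e^{-\varphi/h})=0$, so $d_\varphi$ only sees the cutoff:
\begin{equation*}
d_\varphi f_n^{(0)}=h^{3/4}c_n(h)\,\chi_n'(x)\,e^{-\varphi(x)/h}.
\end{equation*}
By \eqref{eq:cutoff}, $\supp\chi_n'\subset\{\sigma_1-2\epsilon\leq\varphi\leq\sigma_1-\epsilon\}$, a fixed compact set on which $e^{-\varphi/h}\leq e^{-(S-2\epsilon)/h}$, yielding $\Vert d_\varphi f_n^{(0)}\Vert_{L^2}=\ooo(h^{3/4}e^{-(S-2\epsilon)/h})$; shrinking the cutoff parameter slightly absorbs the $h$-prefactor into the exponential and gives the stated bound.

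The main step is the projection defect. Setting $u:=(\Id-\Pi^{(0)})f_n^{(0)}$, Proposition \ref{prop:roughestimspectre} gives $\Delta_\varphi\geq\varepsilon_0 h$ on $\Ran(\Id-\Pi^{(0)})$, so $\Vert u\Vert^2\leq(\varepsilon_0 h)^{-1}\<\Delta_\varphi u,u\>=(\varepsilon_0 h)^{-1}\Vert d_\varphi u\Vert^2$. Because $\Pi^{(0)}$ is a self-adjoint projection commuting with $\Delta_\varphi=d_\varphi^*d_\varphi$, I can slide $\Id-\Pi^{(0)}$ off one factor and use supersymmetry to find
\begin{equation*}
\Vert d_\varphi u\Vert^2=\<\Delta_\varphi u,u\>=\<\Delta_\varphi f_n^{(0)},u\>=\<d_\varphi f_n^{(0)},d_\varphi u\>\leq \Vert d_\varphi f_n^{(0)}\Vert\,\Vert d_\varphi u\Vert,
\end{equation*}
so $\Vert d_\varphi u\Vert\leq\Vert d_\varphi f_n^{(0)}\Vert$ and $\Vert u\Vert\leq(\varepsilon_0 h)^{-1/2}\Vert d_\varphi f_n^{(0)}\Vert=\ooo(h^{1/4}e^{-(S-2\epsilon)/h})$, which is $\ooo(e^{-(S-\epsilon')/h})$ for any $\epsilon'>\epsilon$. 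The only real obstacle is precisely this final conversion: turning the easy pointwise estimate on $d_\varphi f_n^{(0)}$ into a bound on the distance from $f_n^{(0)}$ to the low-lying spectral subspace $E^{(0)}$. It is here that the supersymmetric factorization and the $h$-scale spectral gap of Proposition \ref{prop:roughestimspectre} both become essential; without the gap, polynomial-in-$h$ contributions from the intermediate spectrum would destroy the exponential smallness.
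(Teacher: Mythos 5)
Your proof of the first two claims matches the paper's argument: orthonormality comes from the disjoint supports of the $\chi_n$, and the bound on $d_\varphi f_n^{(0)}$ from the identity $d_\varphi(e^{-\varphi/h})=0$ together with the lower bound $\varphi\geq\sigma_1-2\epsilon = S-2\epsilon$ on $\supp\chi_n'$, with the polynomial prefactor absorbed into the exponential upon shrinking $\epsilon$.

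For the projection defect, however, you take a genuinely different route, and it is a correct one. The paper proves $\Delta_\varphi f_n^{(0)}=\ooo_{L^2}(e^{-(S-\epsilon)/h})$ directly and then invokes the contour representation \eqref{eq:cauchyPi0}, writing
\[
g_n^{(0)}-f_n^{(0)} = \frac{1}{2\pi i}\int_\gamma (z-\Delta_\varphi)^{-1}z^{-1}\Delta_\varphi f_n^{(0)}\,dz,
\qquad \gamma = \partial B(0,\delta\epsilon_0 h),
\]
and bounding $(z-\Delta_\varphi)^{-1}=\ooo(h^{-1})$ uniformly on $\gamma$ to get $\ooo(h^{-1}e^{-(S-\epsilon)/h})$. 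You instead set $u=(\Id-\Pi^{(0)})f_n^{(0)}$, use the spectral gap $\Delta_\varphi\geq\varepsilon_0 h$ on $\Ran(\Id-\Pi^{(0)})$ together with the supersymmetric factorization to slide the projector and Cauchy--Schwarz off $\<\Delta_\varphi u,u\>=\<d_\varphi f_n^{(0)},d_\varphi u\>$, obtaining $\|d_\varphi u\|\leq\|d_\varphi f_n^{(0)}\|$ and hence $\|u\|\leq(\varepsilon_0 h)^{-1/2}\|d_\varphi f_n^{(0)}\|$. This is a valid quadratic-form/energy argument that avoids contour integration entirely and only needs the crude bound on $d_\varphi f_n^{(0)}$ rather than the second-derivative estimate on $\Delta_\varphi f_n^{(0)}$; it also produces the slightly sharper prefactor $h^{-1/2}$ in place of $h^{-1}$, though both are swallowed by the exponential. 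The paper's contour approach is a standard template that generalizes directly when one needs finer resolvent information (e.g.\ for Lemma \ref{cor:spectM}), whereas your argument is leaner when only the subspace distance is required; both are fine here.

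One small point to make explicit: you should note that $f_n^{(0)}\in C_c^\infty\subset D(\Delta_\varphi)$ and that $\Pi^{(0)}$ preserves the domain, so $u\in D(\Delta_\varphi)$ and the identity $\<\Delta_\varphi u,u\>=\|d_\varphi u\|^2$ is legitimate.
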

\begin{proof}
The first statement is a direct consequence of the support properties of the cut-off functions $\chi_n$ and the choice of the normalizing constant.
To see the second estimate, recall that $d_\varphi e^{-\varphi/h}=0$. Hence
$$
d_\varphi f_n^{(0)}(x)=h^{\frac 3 4} c_n(h)\chi_n'(x) e^{-\varphi(x)/h}.
$$
Moreover, thanks to \eqref{eq:cutoff}, there exists $c>0$ such that 
for $\epsilon>0$ small enough we have
$ \varphi(x)\geq S-\epsilon $ {for} $ x\in \supp(\chi_n')$. 
Combining these two facts gives estimates on $d_\varphi f_n^{(0)} $. 

We now prove the  estimate on  $ g_n^{(0)}-f_n^{(0)}$. We first observe that 
$$
\Delta_\varphi f_n^{(0)}=d_\varphi^* d_\varphi f_n^{(0)}=h^{\frac 3 4} c_n(h)d_\varphi^*(\chi_n' e^{-\varphi/h})
=h^{\frac 3 4} c_n(h)(-h\chi_n''+2\partial_x\varphi \chi'_n) e^{-\varphi/h}
$$
and the same argument as before shows that 
\begin{equation}\label{eq:estimdeltaphifn0}
\Delta_\varphi f_n^{(0)}=\ooo_{L^2}(e^{-(S-\epsilon)/h}).
\end{equation}
From
\eqref{eq:cauchyPi0} and  Cauchy formula, it follows that
\begin{align*}
g_n^{(0)} - f_n^{(0)}
& = \Pi^{(0)} f_n^{(0)} - f_n^{(0)} 
= \frac{1}{2 \pi i } \int_\gamma (z - \Delta_\varphi)^{-1} f_n^{(0)} dz - \frac{1}{2 \pi i } \int_\gamma z^{-1} f_n^{(0)}d z \\
& = \frac{1}{2 \pi i } \int_\gamma (z -  \Delta_\varphi)^{-1} z^{-1}  \Delta_\varphi f_n^{(0)} d z ,  
\end{align*}
with $\gamma = \partial B ( 0 , \delta\epsilon_0 h ), 0 < \delta < 1 $.
Since $\Delta_\varphi$ is selfadjoint and $\sigma(\Delta_\varphi)\cap[0,\epsilon_0 h]\subset [0,e^{-1/Ch}]$, we have for $\alpha>0$ small enough
\begin{equation*}
\big\Vert ( z - \Delta_\varphi )^{- 1} \big\Vert = \ooo ( h^{-1} ) ,
\end{equation*}
uniformly for $z \in \gamma$. Using \eqref{eq:estimdeltaphifn0}, we get
$\big\Vert ( z - \Delta_\varphi)^{- 1} z^{- 1} \Delta_\varphi f_k^{(0)} \big\Vert = \ooo \big( h^{- 2} e^{-(S-\epsilon) / h} \big) $,
and, after integration,
$ \| g_k^{(0)} - f_k^{(0)} \|  = \ooo ( h^{-1 } e^{- (S-\epsilon) / h} ) = \ooo (  e^{- (S-\epsilon') / h} ), $
for any $\epsilon'>\epsilon$.
\end{proof}

 \subsection{Quasi-modes for $\Delta_{-\varphi}$} Since, $\varphi$ and $-\varphi$ share similar properties, the construction of the preceding section produces quasi-modes for $\Delta_{-\varphi}$. Eventually we will only need  quasi-modes localized near the maxima $\s_k$. Hence, let 
  $\theta_k \in C^\infty_c ( \R ; [ 0 , 1] ) $ satisfy
 \begin{equation}
 \label{eq:delta0} \supp\theta_k\subset \{\vert x-\s_k\vert\leq\delta_0\}, \ \ 
 \text{ $\theta_k=1$ on $\{\vert x-\s_k\vert\leq \frac{\delta_0}2\}$. } 
 \end{equation}
We take $ \epsilon $ in the definition \eqref{eq:cutoff} small enough
then for all $k=1,\ldots N-1$, we have
 \begin{equation}\label{eq:propsupptronc}
 \theta_k\chi'_k=\chi'_{k,+}\text{ and } \theta_k\chi'_{k+1}=\chi'_{k+1,-}
 \end{equation}
 where $\chi_{k,\pm}$ are the smooth functions defined by  
 $$
 \chi_{k,+}(x)=\left\{
 \begin{array}{cc}
 \chi_k(x)&\text{ if }x\geq m_k,\\
 1&\text{ if }x<m_k,
 \end{array}
 \right.
\ \ \ \
 \chi_{k,-}(x)=\left\{
 \begin{array}{cc}
 \chi_k(x)&\text{ if }x\leq m_k,\\
 1&\text{ if }x>m_k.
 \end{array}
 \right.
 $$
Moreover, we also have $\theta_k\theta_l=0$ for all $k\neq l$. The family of quasi-modes associated to these cut-off functions is given by
\begin{equation}\label{eq:definfn1}
f_k^{(1)}(x):=h^{-\frac 14}d_k(h)\theta_k(x) e^{(\varphi(x)-S)/h}, \ \ \
\Vert f_k^{(1)}\Vert_{L^2}=1 , 
\end{equation}
where $d_k(h)=\vert\varphi''(s_k)\vert^{\frac 14}\pi^{-\frac 14}+\ooo(h)$ is the 
normalizing constant. Again,  we introduce the projection of these quasi-modes onto the eigenspace $E^{(1)}$:
\begin{equation}\label{eq:definen1}
g_k^{(1)}(x):=\Pi^{(1)}f_k^{(1)}.
\end{equation}
\begin{lemma}\label{lem:propf1}
There exists $\alpha>0$ independant of $\epsilon$ such that the following hold true
$$
\<f_k^{(1)},f_l^{(1)}\>=\delta_{k,l}, \ \ \ \forall  k,l=1,\ldots ,N-1,
$$
$$d_\varphi^* f_k^{(1)}=\ooo_{L^2}(e^{-\alpha/h}), \ \ \ g_k^{(1)}-f_k^{(1)}=\ooo_{L^2}(e^{-\alpha/h})$$
\end{lemma}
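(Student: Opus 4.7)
The strategy parallels that of Lemma \ref{lem:propf0}, with the roles of $\varphi$/$-\varphi$ and of $d_\varphi$/$d_\varphi^*$ interchanged. The key algebraic identity, dual to $d_\varphi e^{-\varphi/h}=0$, is
\[
d_\varphi^* e^{\varphi/h} = (-h\partial_x+\partial_x\varphi)(e^{\varphi/h})=0,
\]
so $d_\varphi^*$ annihilates the exponential factor $e^{(\varphi-S)/h}$ in \eqref{eq:definfn1}. The orthogonality $\<f_k^{(1)},f_l^{(1)}\>=0$ for $k\neq l$ is immediate from the pairwise disjointness of the $\supp\theta_k$ noted just before \eqref{eq:definfn1}, and the normalization constant $d_k(h)=(\nu_k/\pi)^{\frac14}+\ooo(h)$ is forced by Laplace's method applied to $\int\theta_k^2 e^{2(\varphi-\sigma_1)/h}dx$ near the non-degenerate maximum $\s_k$, where the harmonic approximation $\varphi-\sigma_1\sim -\frac{\nu_k}{2}(x-\s_k)^2$ produces the standard Gaussian asymptotics.

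For the second estimate, the annihilation identity gives
\[
d_\varphi^* f_k^{(1)} = -h^{\frac 34}d_k(h)\,\theta_k'(x)\,e^{(\varphi(x)-S)/h}.
\]
Since $\theta_k'$ vanishes where $\theta_k\equiv 1$, its support lies in $\{\delta_0/2\leq|x-\s_k|\leq\delta_0\}$ and does not contain $\s_k$. Because $\s_k$ is the unique point of $\supp\theta_k$ where $\varphi$ attains the value $\sigma_1$, there exists $\alpha>0$ depending only on $\varphi$ and on $\delta_0$ --- in particular independent of $\epsilon$ and $h$ --- such that $\varphi\leq\sigma_1-2\alpha$ on the compact set $\supp\theta_k'$. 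Recalling that the normalization $\varphi_0=0$ makes $S=\sigma_1$, we obtain $\Vert d_\varphi^* f_k^{(1)}\Vert_{L^2}=\ooo(e^{-\alpha/h})$.

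For the third estimate, I apply \eqref{eq:susy1D} with $-\varphi$ in place of $\varphi$; using $d_{-\varphi}=-d_\varphi^*$ yields the supersymmetric factorization $\Delta_{-\varphi}=d_\varphi\circ d_\varphi^*$, and therefore
\[
\Delta_{-\varphi} f_k^{(1)} = -h^{\frac 34}d_k(h)\,\bigl(h\theta_k''+2\partial_x\varphi\cdot\theta_k'\bigr)\,e^{(\varphi-S)/h}
\]
is also supported in $\supp\theta_k'$ and satisfies $\Vert\Delta_{-\varphi}f_k^{(1)}\Vert_{L^2}=\ooo(e^{-\alpha/h})$. Exactly as at the end of the proof of Lemma \ref{lem:propf0}, the Cauchy representation \eqref{eq:cauchyPi1} on $\gamma=\partial B(0,\delta\varepsilon_1 h)$ gives
\[
g_k^{(1)}-f_k^{(1)} = \frac{1}{2\pi i}\int_\gamma (z-\Delta_{-\varphi})^{-1}z^{-1}\Delta_{-\varphi}f_k^{(1)}\,dz,
\]
and the resolvent bound $\Vert(z-\Delta_{-\varphi})^{-1}\Vert=\ooo(h^{-1})$ on $\gamma$ is valid because Proposition \ref{prop:roughestimspectre} places the $N-1$ small eigenvalues of $\Delta_{-\varphi}$ inside $[0,Ce^{-(S-\epsilon)/h}]$ --- far from $\gamma$ --- while the essential spectrum starts above $c_0$. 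Integration gives $\Vert g_k^{(1)}-f_k^{(1)}\Vert_{L^2}=\ooo(h^{-1}e^{-\alpha/h})=\ooo(e^{-\alpha'/h})$ for any $\alpha'<\alpha$. No essential new difficulty arises beyond Lemma \ref{lem:propf0}; the only point requiring care is the $\epsilon$-independence of $\alpha$, which is built into the construction since the cut-offs $\theta_k$ in \eqref{eq:delta0} depend only on the geometric scale $\delta_0$ and not on the parameter $\epsilon$ entering \eqref{eq:cutoff}.
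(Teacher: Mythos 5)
Your proposal is correct and takes precisely the route the paper intends: the paper only records that the argument ``follows the same lines as the proof of Lemma~\ref{lem:propf0},'' and you have carried out exactly that dual computation, using $d_\varphi^* e^{\varphi/h}=0$, the factorization $\Delta_{-\varphi}=d_\varphi d_\varphi^*$, disjointness of the $\supp\theta_k$, and the Cauchy-integral/resolvent estimate with $\Pi^{(1)}$ via \eqref{eq:cauchyPi1}. The observation that $\alpha$ is $\epsilon$-independent because the $\theta_k$ are built at the fixed scale $\delta_0$ is also the correct reason the lemma is stated that way.
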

\begin{proof}
The proof follows the same lines as the proof of Lemma \ref{lem:propf0}.
\end{proof}

\subsection{ Computation of the operator $\lll$}
 In this section we represent $\lll$ in a suitable basis.
 For that we first observe that the bases $(g_n^{(0)})$ and $(g_k^{(1)})$ are quasi-orthonormal. Indeed, thanks to  Lemmas \ref{lem:propf0} and \ref{lem:propf1}, we have 
$$
\<g_n^{(0)},g_m^{(0)}\>=\delta_{n,m}+\ooo(e^{-\alpha/h}), \ \ \  \forall n,m=1,\ldots ,N
$$
and 
$$
\<g_k^{(1)},g_l^{(1)}\>=\delta_{k,l}+\ooo(e^{-\alpha/h}), \ \ \   \forall k,l=1,\ldots ,N-1.
$$
for some $\alpha>0$.
We then obtain orthonormal bases of $ E^{(0)} $ and $ E^{(1)} $:
\begin{equation}
\label{eq:GrSch}
\begin{split} 
(g_n^{(0)} )_{ 1\leq n \leq N } & \xrightarrow{\text{Gramm--Schmidt process}} (e_n^{(0)})_{ 1 \leq n \leq N } , \\ 
(g_k^{(1)} )_{ 1\leq k \leq N -1 } & \xrightarrow{\text{Gramm--Schmidt process}} (e_k^{(1)})_{ 1 \leq n \leq N -1 } .
\end{split} 
\end{equation} 
It follows from the approximate orthonormality above 
that the change of basis matrix $P_j$ from $(g_n^{(j)})$ to
$( e_n^{(j)})$ satisfies
\begin{equation}\label{eq:passage}
P_j=\Id +\ooo(e^{-\alpha/h})
\end{equation}
for $j=0,1$.
To describe the matrix of $\lll$ in the bases $(e_n^{(0)})$ and $(e_k^{(1)})$ we introduce 
a $N-1\times N$ matrix $\hat L=(\hat\ell_{ij})$ defined by 
 \begin{equation}
 \label{eq:lhat}
 \hat\ell_{ij}=\<f_i^{(1)},d_\varphi f_j^{(0)}\>.
 \end{equation}
We claim that the matrices $L$ and $\hat L$ are very close. To see
that we give a precise expansion of $\hat L$:
 \begin{lemma}\label{lem:computhatL}
 The matrix $ \hat L $ defined by \eqref{eq:lhat} is given by  
   $\hat L=( h/\pi)^{\frac 12}e^{-S/h} \bar L $ where 
 $\bar  L$ admits a classical expansion $\bar L\sim\Sigma_{k=0}^\infty h^k  L_k$ with 
 \begin{equation}
 \label{eq:matrixL}
 L_0= 
 \left(
 \begin{array}{ccccccc}
-\nu_1^{\frac14} \mu_1^{\frac14} & \ \nu_1^{\frac14} \mu_2^{\frac14} &0&0&\ldots&0\\
0&- \nu_2^{\frac14} \mu_2^{\frac14} & \nu_2^{\frac14} \mu_3^{\frac14} &0&\ldots&0\\
\vdots&\ddots&\ddots&\ddots&\ddots&\vdots\\
\vdots&\ddots&\ddots&\ddots&\ddots&0\\
0&0&\ldots&0&  - \nu_{n-1}^{\frac14} \mu_{n-1}^{\frac14} & 
 \nu_{n-1}^{\frac14} \mu_{n}^{\frac14} 
\end{array}
 \right).
\end{equation}
 \end{lemma}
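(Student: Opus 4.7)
The plan is to reduce the computation of $\hat\ell_{ij}=\langle f_i^{(1)},d_\varphi f_j^{(0)}\rangle$ to an essentially exact formula by exploiting the fundamental identity $d_\varphi e^{-\varphi/h}=0$. This identity cancels the exponential weights in the integrand, so the matrix entries of $\hat L$ reduce to pure cutoff integrals rather than genuine Laplace integrals.

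First I would differentiate $f_j^{(0)}$. Writing $d_\varphi = e^{-\varphi/h}\circ h\partial_x\circ e^{\varphi/h}$, the derivative falls only on the cutoff function:
\[ d_\varphi f_j^{(0)}(x) = h^{3/4}c_j(h)\chi_j'(x)e^{-\varphi(x)/h}. \]
Pairing against $f_i^{(1)}(x)=h^{-1/4}d_i(h)\theta_i(x)e^{(\varphi(x)-S)/h}$, the exponential weights combine to the constant $e^{-S/h}$ and I obtain the exact identity
\[ \hat\ell_{ij} = h^{1/2}d_i(h)c_j(h)e^{-S/h}\int_\R \theta_i(x)\chi_j'(x)\,dx. \]

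Next I would evaluate the integral using \eqref{eq:propsupptronc}. For $j=i$, $\theta_i\chi_i' = \chi_{i,+}'$ and since $\chi_{i,+}$ equals $1$ at $-\infty$ and $0$ at $+\infty$, this integrates to $-1$. For $j=i+1$, $\theta_i\chi_{i+1}' = \chi_{i+1,-}'$ similarly integrates to $+1$. For all other $j$ the supports of $\theta_i$ and $\chi_j'$ are disjoint provided $\epsilon$ in \eqref{eq:cutoff} is chosen small enough relative to $\delta_0$, so the integral vanishes. Thus only the diagonal and superdiagonal entries survive:
\[ \hat\ell_{i,i} = -h^{1/2}e^{-S/h}d_i(h)c_i(h), \quad \hat\ell_{i,i+1} = h^{1/2}e^{-S/h}d_i(h)c_{i+1}(h). \]

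Finally I would extract the classical expansion. The method of steepest descent applied to the normalization conditions $\|f_j^{(0)}\|_{L^2}=\|f_k^{(1)}\|_{L^2}=1$ gives classical expansions $c_j(h) \sim (\mu_j/\pi)^{1/4} + O(h)$ and $d_k(h) \sim (\nu_k/\pi)^{1/4} + O(h)$ valid to all orders in $h$, in the form \eqref{eq:asympcn}. Factoring out $(h/\pi)^{1/2}e^{-S/h}$ from each nonzero $\hat\ell_{ij}$ and substituting, the leading coefficients are exactly those of $L_0$ in \eqref{eq:matrixL}, and the full asymptotic series $\bar L \sim \sum_k h^k L_k$ arises as the product of the two classical expansions. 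I do not anticipate a serious obstacle here; the algebraic identity $d_\varphi e^{-\varphi/h}=0$ renders the matrix entries almost explicit, and the only points requiring care are the support compatibilities encoded in \eqref{eq:propsupptronc} and the standard Laplace asymptotics of the normalization constants.
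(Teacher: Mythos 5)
Your proposal is correct and follows essentially the same route as the paper's proof: differentiating the cutoff via $d_\varphi e^{-\varphi/h}=0$, reducing $\hat\ell_{ij}$ to the cutoff integral $h^{1/2}d_i(h)c_j(h)e^{-S/h}\int\theta_i\chi_j'$, using \eqref{eq:propsupptronc} together with the support condition to evaluate it as $-\delta_{ij}+\delta_{i+1,j}$, and then inserting the Laplace expansions of $c_j$ and $d_i$ to identify $L_0$. The only cosmetic difference is that you spell out explicitly why $\int\chi_{i,+}'=-1$ and $\int\chi_{i+1,-}'=+1$ from their boundary values, a step the paper leaves implicit.
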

\begin{proof}
From \eqref{eq:definfn0} and  \eqref{eq:definfn1}, we have
\begin{equation*}
\begin{split}
 \hat\ell_{ij}&=\<f_i^{(1)},d_\varphi f_j^{(0)}\>
 =h^{-\frac 12}d_i(h)c_j(h)\int_\R \theta_i(x) e^{(\varphi(x)-S)/h}d_\varphi(\chi_j(x)e^{-\varphi(x)/h})dx\\
 &=h^{\frac 12}d_i(h)c_j(h)e^{-S/h}\int_\R\theta_i(x)\chi_j'(x)dx.
\end{split}
\end{equation*}
Moreover, since $\supp \theta_i\cap \supp \chi_j=\emptyset $ except for $j=i$ or $j=i+1$, it follows from \eqref{eq:propsupptronc} that 
\begin{equation}
\label{eq:psichi}
\int_\R\theta_i(x)\chi_j'(x)dx=\delta_{i,j}\int_\R\chi_{i,+}'(x)dx+\delta_{i+1,j}\int_\R\chi_{i,-}'(x)dx=-\delta_{i,j}+\delta_{i+1,j} .
\end{equation}
On the other hand, we recall that $d_i(h)$ and $c_j(h)$ both have a classical expansion. Together with the above equality, this shows that 
$\hat L$ has the required form and it remains to prove the formula giving $L_0$.
To that end we observe that 
$$
d_i(h)c_j(h)=\pi^{-\frac 12}((\vert\varphi''(s_i)\vert\varphi''(m_j))^{\frac 14}+\ooo(h))=\mu_j^{\frac 14}\nu_i^{\frac14}\pi^{-\frac 12} +\ooo(h)
$$
in the notation of \eqref{eq:nothessphi}. Combining this with \eqref{eq:psichi}
we obtain 
$$
 \hat\ell_{ij}= h^{\frac 12}\pi^{-\frac 12} e^{-S/h} \mu_j^{\frac 14}\nu_i^{\frac14} (-\delta_{i,j}+\delta_{i+1,j}+\ooo(h))
$$
which gives \eqref{eq:matrixL}.
\end{proof}

 \begin{lemma}\label{lem:approxL} Let $ L $ be the matrix 
 of $ \mathcal L $ in the basis obtained in \eqref{eq:GrSch}. 
 There exists $\alpha'>0$ such that 
 $ L=\hat L+\ooo(e^{-(S+\alpha')/h})$, where $ \hat L $ is defined by \eqref{eq:lhat} and is described in Lemma \ref{lem:computhatL}.
 \end{lemma}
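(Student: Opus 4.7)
The plan is to pass from the orthonormal bases in \eqref{eq:GrSch} to the quasi-orthonormal bases $(g_n^{(0)})$ and $(g_k^{(1)})$, where the computation is transparent, and then transfer the result back. Denoting by $\tilde L$ the matrix of $\lll$ expressed in the bases $(g_n^{(0)})$, $(g_k^{(1)})$, the relation \eqref{eq:passage} yields $L = P_1^{-1}\tilde L P_0$ with $P_j = \Id + \ooo(e^{-\alpha/h})$. It therefore suffices to establish both $\tilde L = \hat L + \ooo(e^{-(S+\alpha')/h})$ and the a priori bound $\tilde L = \ooo(e^{-S/h})$, as the change-of-basis correction is then of order $\ooo(e^{-(S+\alpha)/h})$.

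The heart of the matter is the intertwining identity $\Pi^{(1)}d_\varphi = d_\varphi \Pi^{(0)}$. This follows from \eqref{eq:intertwin}: for $z$ outside the spectra of both $\Delta_\varphi$ and $\Delta_{-\varphi}$, one has $d_\varphi(z-\Delta_\varphi)^{-1} = (z-\Delta_{-\varphi})^{-1}d_\varphi$, and integration along a common contour $\gamma=\partial B(0,rh)$, with $r>0$ small enough to lie below both $\varepsilon_0$ and $\varepsilon_1$, produces the claim. Such a $\gamma$ correctly isolates the low-lying eigenvalues on both sides because, by the supersymmetric structure \eqref{eq:susy1D}, the nonzero spectra of $\Delta_\varphi = d_\varphi^* d_\varphi$ and $\Delta_{-\varphi} = d_\varphi d_\varphi^*$ coincide with the same multiplicities.

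With the intertwining in hand, $d_\varphi g_j^{(0)} = \Pi^{(1)} d_\varphi f_j^{(0)} \in E^{(1)}$. Expanding $d_\varphi g_j^{(0)}$ in the quasi-orthonormal basis $(g_i^{(1)})$ requires inverting its Gram matrix, which is $\Id + \ooo(e^{-\alpha/h})$, so
\[
\tilde L_{ij} = \langle g_i^{(1)}, d_\varphi g_j^{(0)}\rangle + \ooo(e^{-\alpha/h})\, \Vert d_\varphi g_j^{(0)}\Vert.
\]
Since $\Pi^{(1)} g_i^{(1)} = g_i^{(1)}$ and $g_i^{(1)} = f_i^{(1)} + \ooo_{L^2}(e^{-\alpha/h})$ by Lemma \ref{lem:propf1}, combined with $d_\varphi f_j^{(0)} = \ooo_{L^2}(e^{-(S-\epsilon)/h})$ from Lemma \ref{lem:propf0}, one obtains
\[
\langle g_i^{(1)}, d_\varphi g_j^{(0)}\rangle = \langle g_i^{(1)}, d_\varphi f_j^{(0)}\rangle = \hat\ell_{ij} + \ooo(e^{-(S-\epsilon+\alpha)/h}),
\]
together with $\Vert d_\varphi g_j^{(0)}\Vert = \ooo(e^{-(S-\epsilon)/h})$. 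Taking $\epsilon<\alpha/2$ in the definition \eqref{eq:cutoff} of the cut-offs packages every error into $\ooo(e^{-(S+\alpha')/h})$ for some $\alpha'>0$, yielding simultaneously the comparison $\tilde L = \hat L + \ooo(e^{-(S+\alpha')/h})$ and the a priori bound $\tilde L = \ooo(e^{-S/h})$.

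The main obstacle is the intertwining of spectral projectors: one must check that a single contour of radius of order $h$ isolates exactly the low-lying eigenvalues on each side, which rests on the Hodge pairing between the nonzero spectra of $d_\varphi^* d_\varphi$ and $d_\varphi d_\varphi^*$. Once this step is established, the remainder of the proof is routine bookkeeping of exponentially small corrections.
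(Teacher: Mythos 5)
Your proof is correct and follows essentially the same route as the paper's: both reduce to the quasi-orthonormal bases via \eqref{eq:passage}, move the spectral projector through $d_\varphi$ using $\Pi^{(1)}d_\varphi=d_\varphi\Pi^{(0)}$, and then bound the residual $\langle g_i^{(1)}-f_i^{(1)},d_\varphi f_j^{(0)}\rangle$ by Cauchy--Schwarz using Lemmas \ref{lem:propf0} and \ref{lem:propf1}, choosing $\epsilon$ small relative to the $\epsilon$-independent $\alpha$. The paper takes the projector intertwining for granted as a consequence of \eqref{eq:intertwin} (without the contour or Hodge-pairing discussion you supply) and works directly with the matrix of inner products $\langle g_i^{(1)},d_\varphi g_j^{(0)}\rangle$ rather than the matrix of $\lll$ in the $g$-bases, but these are presentational rather than substantive differences.
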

 \begin{proof}
  It follows from \eqref{eq:passage} that
\be\label{eq:chbase1}
 L=(\Id+\ooo(e^{-\alpha/h})) \tilde L(\Id+\ooo(e^{-\alpha/h}))
\ee
where $\tilde L=(\tilde \ell_{i,j})$ with $\tilde \ell_{i,j}=\<g_i^{(1)},d_\varphi g_j^{(0)}\>$. Moreover, \eqref{eq:intertwin} implies that 
$\Pi^{(1)}d_\varphi=d_\varphi \Pi^{(0)}$. Using this identity and the fact that $\Pi^{(0)},\Pi^{(1)}$ are orthogonal projections, we have
\begin{equation*}
\begin{split}
\<g^{(1)}_i,d_\varphi g^{(0)}_j\>&=\<g^{(1)}_i,d_\varphi \Pi^{(0)}f^{(0)}_j\>=\<g^{(1)}_i,\Pi^{(1)}d_\varphi f^{(0)}_j\>=\<g^{(1)}_i,d_\varphi f^{(0)}_j\>\\
&=\<f^{(1)}_i,d_\varphi f^{(0)}_j\>+\<g^{(1)}_i-f^{(1)}_i,d_\varphi f^{(0)}_j\>
\end{split}
\end{equation*}
But from Lemmas \ref{lem:propf0}, \ref{lem:propf1} and the Cauchy-Schwarz inequality we get
$$
\vert \<g^{(1)}_i-f^{(1)}_i,d_\varphi f^{(0)}_j\>\vert\leq C e^{-(\alpha+S-\epsilon')/h}.
$$
Since $\alpha$ is independent of $\epsilon'$ which can be chosen as small as we want, it follows that there exists $\alpha'>0$ such that 
$\tilde\ell_{i,j}=\hat \ell_{i,j}+\ooo(e^{-(S+\alpha')/h})$. Combining this estimate, \eqref{eq:chbase1} and the fact that $ \hat \ell_{i,j}=\ooo(e^{-S/h})$, we get 
the announced result.
\end{proof}

It is now easy to describe 
$\mmm$ as a matrix:
 \begin{lemma}\label{lem:asymptM}
Let $M$ be the matrix representation of $\mmm$ in the basis $(e^{(0)}_n)$. Then 
$$M= h e^{-2S/h}A$$
 where  $A$ is symmetric positive with a classical expansion $A\sim\sum_{k=0}^\infty h^k A_k$
with $A_0$ given by \eqref{eq:defA0}.
\end{lemma}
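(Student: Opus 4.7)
The plan is to exploit the supersymmetric factorization \eqref{eq:susymatrice}, which asserts $\mathcal{M}=\mathcal{L}^*\mathcal{L}$, and then read off $M$ from the representation of $\mathcal{L}$ already computed in Lemmas \ref{lem:computhatL} and \ref{lem:approxL}. Since the bases $(e_n^{(0)})$ and $(e_k^{(1)})$ constructed in \eqref{eq:GrSch} are orthonormal in $L^2(\mathbb{R})$, the matrix of $\mathcal{L}^*$ in these bases is just the transpose $L^T$ of the matrix $L$ of $\mathcal{L}$. Consequently
\[
M \;=\; L^{T} L.
\]
From Lemma \ref{lem:approxL} we write $L=\hat L+R$ with $R=\mathcal O(e^{-(S+\alpha')/h})$, and from Lemma \ref{lem:computhatL} we have $\hat L=(h/\pi)^{1/2}e^{-S/h}\bar L$ with $\bar L\sim\sum_{k\geq 0}h^k L_k$. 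In particular $\hat L=\mathcal O(h^{1/2}e^{-S/h})$.

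Expanding the product gives
\[
M \;=\; \hat L^{T}\hat L \;+\; \hat L^{T}R + R^{T}\hat L + R^{T}R.
\]
All three cross/remainder terms are bounded by $\mathcal O(e^{-(2S+\alpha')/h})$, and hence negligible compared to the $\hat L^T\hat L$ contribution of size $he^{-2S/h}$. Thus
\[
M \;=\; \tfrac{h}{\pi}\,e^{-2S/h}\,\bar L^{T}\bar L \;+\; \mathcal O\!\bigl(e^{-(2S+\alpha')/h}\bigr).
\]
Setting $A:=\pi^{-1}\bar L^{T}\bar L + h^{-1}e^{2S/h}\cdot\mathcal O(e^{-(2S+\alpha')/h})$, the exponentially small term may be absorbed in the remainder of any classical expansion, so $A\sim\sum_{k\geq 0}h^k A_k$ with $A_0=\pi^{-1}L_0^{T}L_0$. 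Symmetry of $A$ is immediate from the factored form $L^TL$, and positivity follows from the fact that $\mathcal{M}$ is the restriction of the non-negative self-adjoint operator $\Delta_\varphi$ to the invariant subspace $E^{(0)}$.

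It remains to verify that $A_0=\pi^{-1}L_0^{T}L_0$ coincides with the matrix \eqref{eq:defA0}. This is a direct computation from \eqref{eq:matrixL}: the only non-zero entries of column $j$ of $L_0$ are $(L_0)_{j-1,j}=\nu_{j-1}^{1/4}\mu_j^{1/4}$ (absent for $j=1$) and $(L_0)_{j,j}=-\nu_j^{1/4}\mu_j^{1/4}$ (absent for $j=N$), with the convention $\nu_0=\nu_N=0$. Therefore
\[
(L_0^TL_0)_{jj}=\mu_j^{1/2}\bigl(\nu_{j-1}^{1/2}+\nu_j^{1/2}\bigr),\qquad (L_0^TL_0)_{j,j+1}=-\nu_j^{1/2}\mu_j^{1/4}\mu_{j+1}^{1/4},
\]
and $(L_0^TL_0)_{j,k}=0$ for $|j-k|\geq 2$, which upon multiplication by $\pi^{-1}$ reproduces exactly the entries in \eqref{eq:defA0}.

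The only subtle point, and the step one needs to be careful with, is the error accounting: one must verify that the exponentially small remainder $\mathcal O(e^{-(2S+\alpha')/h})$ coming from $\hat L^T R$, $R^T\hat L$, and $R^TR$ is indeed better than the natural scale $he^{-2S/h}$ of the leading term, so that it can be harmlessly absorbed into the classical expansion of $A$. This is guaranteed because $\alpha'>0$ in Lemma \ref{lem:approxL} is strictly positive and independent of $h$; everything else is bookkeeping.
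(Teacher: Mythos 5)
Your proposal is correct and follows essentially the same route as the paper: it factors $M=L^{*}L$ via the supersymmetric structure, substitutes $L=\hat L+\ooo(e^{-(S+\alpha')/h})$ from Lemma \ref{lem:approxL} and $\hat L=(h/\pi)^{1/2}e^{-S/h}\bar L$ from Lemma \ref{lem:computhatL}, and absorbs the exponentially small cross terms into the classical expansion. The paper leaves the final identity $\pi^{-1}L_0^{T}L_0=A_0$ as a ``simple computation''; you carry it out explicitly, which is a welcome bit of extra detail (and you correctly track the $\pi^{-1}$ factor that the paper's displayed line elides).
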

\bp
By definition,  $M=L^*L$ and it follows from Lemma \ref{lem:computhatL} and  \ref{lem:approxL} that 
$$
L^*L=(\hat L+\ooo(e^{-(S+\alpha')/h}))^*(\hat L+\ooo(e^{-(S+\alpha')/h}))
= h e^{-2S/h}(\bar L^*\bar L+\ooo(e^{-\alpha'/h}))
$$
Then, $A:=h^{-1}e^{2S/h}L^*L$ is clearly positive  and admits a classical expansion since $\bar L$ does. Moreover, the leading term of this expansion is 
$\bar L_0^*\bar L_0$ and 
 a simple computation shows that $\bar L_0^*\bar L_0=A_0$, where
 $ A_0 $ is given by \eqref{eq:defA0}.
\ep

\noindent
{\bf Remark.} Innocent as this lemma might seem, the supersymmetric structure,
that is writing $ - \Delta_\varphi |_{ E^{(0)}} $ using $ d_\varphi $, is very 
useful here. 

\begin{lemma}\label{cor:spectM}
Denote by $\mu_1(h)\leq\ldots\leq \mu_k(h)$ the eigenvalues of $A(h)$. Then, 
$$
\mu_0(h)=0\text{ and }\mu_k(h)=\mu^0_k+\ooo(h), \ \ \ \forall k\geq 2,
$$
where $0=\mu^0_1<\mu^0_2\leq\mu^0_3\leq\ldots\leq\mu^0_N$ denote the eigenvalues of $A_0$.
Moreover, a normalized
eigenvector associated to $\mu_1^0$ is $\xi^0=N^{-\frac 12}(1,\ldots,1)$ and there exists a normalized vector $\xi(h)\in\ker(A(h))$, such that 
\be\label{eq:comparnoyau}
\xi(h)=\xi_0+\ooo(h).
\ee
\end{lemma}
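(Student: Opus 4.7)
The plan is to split the proof into four steps: identifying the kernel of the leading matrix $A_0$, applying perturbation theory for the higher eigenvalues, producing an \emph{exact} zero eigenvalue of $A(h)$ via the supersymmetry of $\Delta_\varphi$, and finally tracking the kernel vector via a Riesz projection.

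For the first two steps, I would read off $\ker A_0$ directly from \eqref{eq:defA0}: in the setting of the lemma each row of $A_0$ sums to zero, so $A_0 \xi_0 = 0$ with $\xi_0 = N^{-1/2}(1,\ldots,1)^T$. Equivalently, from the factorization $A_0 = \bar L_0^*\bar L_0$ obtained in the proof of Lemma \ref{lem:asymptM}, one checks $\bar L_0 \xi_0 = 0$ row by row; since $\bar L_0$ is bidiagonal of rank $N-1$, the kernel is one-dimensional and $\mu_1^0 = 0$ is simple, separated from $\mu_2^0 > 0$. The asymptotic $\mu_k(h) = \mu_k^0 + O(h)$ then follows at once from the classical expansion $A(h) \sim \sum_k h^k A_k$ (Lemma \ref{lem:asymptM}) combined with standard finite-dimensional Kato--Rellich perturbation theory, or equivalently the min--max principle.

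For the third step, I would exploit the supersymmetric relation $\Delta_\varphi = d_\varphi^* d_\varphi$: since $d_\varphi e^{-\varphi/h} = 0$ we have $\Delta_\varphi e^{-\varphi/h} = 0$, and the growth condition \eqref{eq:hyp-croissancephi} guarantees $e^{-\varphi/h} \in L^2$. By Proposition \ref{prop:roughestimspectre} this places $e^{-\varphi/h}$ in $E^{(0)}$, so $\mmm$ --- and therefore $A(h)$ --- has an \emph{exact} zero eigenvalue. Combined with the perturbation estimate from the previous step, this exact zero must be $\mu_1(h)$.

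Finally, for the fourth step, I would use the Riesz projection
\[ P(h) := \frac{1}{2\pi i}\int_{\partial B(0,\mu_2^0/2)} (z - A(h))^{-1}\, dz, \]
which for $h$ small encloses only the simple eigenvalue $0$ and hence is the orthogonal projection onto $\ker A(h)$. Writing $(z - A(h))^{-1} = (z - A_0)^{-1} + O(h)$ uniformly on the contour (via a Neumann series, valid because the contour avoids $\sigma(A_0)$), one obtains $P(h) = P_0 + O(h)$ with $P_0 = \xi_0\xi_0^T$; setting $\xi(h) := P(h)\xi_0/\Vert P(h)\xi_0\Vert$ yields a unit vector in $\ker A(h)$ satisfying \eqref{eq:comparnoyau}. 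The statement is quite elementary once the supersymmetric identification of the zero mode is in hand, so I do not expect a serious obstacle beyond this routine resolvent bookkeeping --- the only real input from analysis is the fact, already supplied by Proposition \ref{prop:roughestimspectre} and its remark, that $e^{-\varphi/h}$ genuinely lies in the spectral subspace $E^{(0)}$.
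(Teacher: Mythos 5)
Your proof is correct and follows essentially the same route as the paper's: Lemma~\ref{lem:asymptM} and the classical expansion give the eigenvalue asymptotics; the exact zero eigenvalue comes from $d_\varphi e^{-\varphi/h}=0$ together with $e^{-\varphi/h}\in L^2$ (the paper's ``$0\in\sigma(\Delta_\varphi)$'' observation); and the kernel comparison \eqref{eq:comparnoyau} is in both cases a Riesz-projection estimate. The one cosmetic difference is in the last step: the paper starts from an arbitrary $\xi\in\ker A(h)$ and estimates $\xi-\langle\xi,\xi_0\rangle\xi_0$ by a contour identity built from the resolvent of $A_0$ and the bound $A_0\xi=\mathcal O(h)$, whereas you project $\xi_0$ with the Riesz projection $P(h)$ of $A(h)$ and normalize; both are equally routine and yield the same $O(h)$ bound.

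One caution on your opening computation: the row sums of the general $A_0$ in \eqref{eq:defA0} do \emph{not} vanish, and $\bar L_0(1,\ldots,1)^T\neq 0$ unless all the $\mu_j$ are equal --- row $i$ of $\bar L_0$ sums to $\nu_i^{1/4}(\mu_{i+1}^{1/4}-\mu_i^{1/4})$. For the matrix of \eqref{eq:defA0} the kernel of $A_0=\bar L_0^*\bar L_0$ is spanned by $(\mu_1^{-1/4},\ldots,\mu_N^{-1/4})$, so the normalized $\xi^0=N^{-1/2}(1,\ldots,1)$ is only correct under the constancy hypothesis \eqref{hyp:phisec-const}. The lemma as stated in the paper carries the same imprecision (and it is harmless for the way it is used later, since what matters is only that $A$ and $A_0$ share a one-dimensional approximate kernel), so you have faithfully reproduced the intended argument; just be aware that the ``each row sums to zero'' shortcut is not generic.
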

\bp
Many of the statements of this lemma are immediate consequence of Lemma \ref{lem:asymptM}. We emphasize the fact that
$0$ belongs to $\sigma(A)$ since $0\in\sigma(\Delta_\varphi)$. 
The fact that $\xi^0$ is in the kernel of $A_0$ is a simple computation. Eventually, for any $\xi\in \ker(A(h))$,  we have
$$
\xi-\<\xi,\xi_0\>\xi_0=\frac 1{2i\pi}\Big(\int_{\gamma}z^{-1}\xi dz-\int_{\gamma}(A_0-z)^{-1}\xi dz\Big) =\frac 1{2i\pi}\int_{\gamma}(A_0-z)^{-1}z^{-1}A_0\xi dz
$$
where $\gamma$ is a small path around $0$ in $\C$. Since $A_0\xi=\ooo(h)$ we obtain \eqref{eq:comparnoyau}.
\ep

 \subsection{Proof of Theorem \ref{th:DynamWitten}}
 \label{s:pft}
 Let $u$ be solution of \eqref{eq:heatwitten} with $u_0=\Psi(\beta)$, 
 $ | \beta | \leq 1 $ (see
 \eqref{eq:defpsin},\eqref{eq:defPsi} and \eqref{eq:definfn0} for 
 definitions of $ \psi_n$, $ \Psi$ and $ f_n^{(0)} $, respectively). Then,
\begin{equation}
\begin{split}
 u&=e^{-t\Delta_\varphi}\Pi^{(0)}u_0+e^{-t\Delta_\varphi}\widehat\Pi^{(0)}u_0\\
 &=e^{-t\mmm}\Pi^{(0)}u_0+e^{-t\Delta_\varphi}\widehat\Pi^{(0)}u_0, \ \ \
 \widehat\Pi^{(0)}: = I-\Pi^{(0)}.
 \end{split}
\end{equation}
Since $\indic_{E_n}-\chi_n$ is supported near $\{s_{n-1},s_n\}$, then 
 $\psi_n-f_n^{(0)}=\ooo_{L^2}(e^{-\alpha/h})$, for all $n=1,\ldots, N$, and it follows that
 $$
 u_0=\bar u_0+\ooo_{L^2}(e^{-\alpha/h}), \ \ \ 
 \bar u_0 :=\sum_{n=1}^N\beta_n f_n^{(0)}.
 $$
 Then, using Lemma \ref{lem:propf0} and \eqref{eq:passage}, we get 
 $
 u_0=\tilde u_0+\ooo_{L^2}(e^{-\alpha/h})
 $
 with 
 $
 \tilde u_0:=\sum_{n=1}^N\beta_n e_n^{(0)}, 
 $
 where $e_n^{(0)}$ is the orthonormal basis of $E^{(0)}$ given by \eqref{eq:GrSch}. Since 
 $\Pi^{(0)}e_n^{(0)}=e^{(0)}_n$ and $\widehat\Pi^{(0)}e_n^{(0)}=0$, we have
 $$
 u(t)=e^{-t\mmm}\tilde u_0+\ooo_{L^2}(e^{-\alpha/h}).
 $$
 If $M$ is the matrix of the operator $\mmm$ in the basis $(e^{(0)}_n)$ then
 $$
 u(t)=\sum_{n=1}^N(e^{-tM}\beta)_n e_n^{(0)}+\ooo_{L^2}(e^{-\alpha/h}).
 $$
 Going back from $e_n^{(0)}$ to $\psi_n$ as above, we see that
 \begin{equation}
 \label{eq:utpsi}
 u(t)=\sum_{n=1}^N(e^{-tM}\beta)_n \psi_n+\ooo_{L^2}(e^{-\alpha/h})=\Psi(e^{-tM}\beta)+\ooo_{L^2}(e^{-\alpha/h})
 \end{equation}
 and the proof of \eqref{eq:DynamWitten1} (main statement in Theorem  \ref{th:DynamWitten}) is complete. We now prove \eqref{eq:DynamWitten2}. Since the linear map $\psi:\C^N\rightarrow L^2(dx)$ 
is bounded uniformly with respect to $h$, and thanks to \eqref{eq:DynamWitten1}, the proof reduces  to showing (after time rescaling) that there exists $C>0$ such that for all $\beta\in\R^N$,
 \be\label{eq:DynamWitten3}
 \vert e^{-\tau A}-e^{-\tau A_0}\vert\leq Ch,\; \ \  \forall \tau\geq 0
 \ee
Since, by Lemma \ref{cor:spectM}, $ A $ and $ A_0 $ both have $ 0 $ as a simple eigenvalue with the approximate eigenvector given by $ ( 1, \cdots , 1 ) $,  
we see that for any norm on $ \C^N $, 
\[ \begin{split} | e^{ - \tau A } - e^{ - \tau A_0 } |
& \leq | e^{ - \tau A_0 } |_{ \{  ( 1, \cdots , 1 ) \}^\perp} 
  | I - e^{ - \tau \mathcal O ( h ) } |_{ \ell^2 \to \ell^2 } + 
 C h  \\
&   \leq C e^{ - c \tau } \tau h + C h  = 
  \mathcal O ( h ) . \end{split} \]
which is exactly \eqref{eq:DynamWitten3}.
\ep

We now prove one of the consequences of Theorem  \ref{th:DynamWitten}.
 \begin{proof}[Proof of \eqref{eq:rhot}]
We have seen in the preceding proof that  $e_n^{(0)}-\psi_n=\ooo_{L^2}(e^{-C/\epsilon^2})$ and since 
\[  \| \psi_n - ( \mu/ 2 \pi \epsilon^2 )^{\frac14} \indic_{E_n }e^{-\varphi/2\epsilon^2} \|_{L^2} = 
\mathcal O ( \epsilon^2 ) ,\]
it follows that 
$\Pi^{(0)}u_0=\psi(\beta)+\ooo(\epsilon^2\Vert u_0\Vert_{L^2})$ with $\beta\in\C^N$ given by
$$
\beta_n = (\tfrac{ \mu}{ 2 \pi \epsilon^2 } )^{\frac14} \int_{ E_n}  u_0 ( x ) 
e^{ -\varphi(x) /2\epsilon^2 } dx= (\tfrac{ \mu}{ 2 \pi \epsilon^2 } )^{\frac14} \int_{ E_n}  \rho_0 ( x ) 
 dx . 
$$
Applying  \ref{th:DynamWitten} (second part of Theorem \ref{th:DynamWitten}) with 
 $h=2\epsilon^2$ gives
\be\label{equa1}
 e^{- t \Delta_\varphi } \Pi^{(0)} u_0 = \sum_{ n=1}^N ( e^{ - t \nu_h A_0 } 
\beta )_n (\tfrac{ \mu}{ 2 \pi \epsilon^2 } )^{\frac14} \indic_{E_n } e^{ -\varphi/2\epsilon^2}  + 
\mathcal O_{L^2} ( \epsilon^2 ) \| u_0\|_{L^2}.
\ee
 On the other hand, Proposition \ref{prop:roughestimspectre} shows that
\be\label{equa2}
e^{-t \Delta_\varphi } (I-\Pi^{(0)} ) u_0 = \mathcal O_{L^2} ( 
e^{ - t \epsilon^2 /C } ) \| u_0 \|_{ L^2 } .
\ee
Since 
$ \rho ( h^2 t ) = e^{-\varphi/h} u ( t ) $, \eqref{equa1} and \eqref{equa2} yield 
\begin{equation}
\label{eq:rhotC} \begin{gathered} \rho ( 2 \epsilon^2 e^{ S/\epsilon^2 } \tau ) = 
\sum_{ n=1}^N ( e^{ - \tau A_0 } 
\beta )_n (\tfrac{ \mu}{ 2 \pi \epsilon^2 } )^{\frac14} \indic_{E_n }e^{ -\varphi/\epsilon^2} + 
r_\epsilon(\tau)
\end{gathered}
\end{equation}
with
$$
r_\epsilon(\tau)=e^{-\varphi/2\epsilon^2}\Big(\ooo_{L^2}(e^{-c \tau e^{ S/ \epsilon^2}})+\ooo_{L^2}(\epsilon^2)\Big)\| \rho_0 \|_{L^2_\varphi }.
$$
By Cauchy-Schwartz it follows that 
$\Vert r_\epsilon(\tau)\Vert_{L^1}\leq C( \epsilon^{\frac 52}  + e^{ - c \tau e^{ S/ \epsilon^2}}
) \| \rho_0 \|_{L^2_\varphi } $.
 \end{proof}

 \section{A higher dimensional example}
 \label{s:get_high}
 
The same principles apply when the wells may have different height and in 
higher dimensions. In both cases there are interesting 
combinatorial and topological (when $ d > 1 $) complications and we refer to 
\cite[\S 1.1, \S 1.2]{Mi16} for a presentation and references. 
To illustrate this we give
 a higher dimensional result in a simplified setting.

Suppose that $\varphi:\R^d\rightarrow\R$ is a smooth Morse function 
satisfying \eqref{eq:hyp-croissancephi} and denote  by $\uuu^{(j)}$ the finite sets of critical points of index $j$,  $n_j := |\uuu^{(j)}| $. We assume that 
\eqref{eq:hyptopol} holds and write $ S := \sigma_1 - \varphi_0 $.
In the notation of \eqref{eq:En} we have $ n_0 = N $ and we also assume that each $ E_n $ contains
exactly one minimum. Hence we can label the components by the minima:
\[   \forall n=1,\ldots, N,\;\;\exists \, ! \, \m\in E_n ,\;\min_{  x \in E_n } \varphi ( x ) = \varphi ( \mathbf m )\]
 and  we denote $ E( \mathbf m ) := E_n $.
Since $\varphi$ is a Morse function, 
\begin{equation}
\label{eq:grpr} 
\begin{gathered} \forall  \m,\m'\in \uuu^{(0)}, \ \ \m \neq \m' \ \Longrightarrow 
\bar E(\m)\cap\bar E(\m') \subset \uuu^{(1)}, \\
 \forall \, \s\in\uuu^{(1)}, \ \exists\, ! \, \m , \m' \in 
\mathcal U^{(0)} , \ \ \ \ \s\in\bar E(\m)\cap\bar E(\m') . 
\end{gathered}
\end{equation}
To simplify the presentation we make an addition assumption
\begin{equation}
\label{eq:h3} 
\forall \m,\m'\in \uuu^{(0)}, \ \ \m \neq \m' \ \Longrightarrow 
|\bar E(\m)\cap\bar E(\m') | \leq 1 . 
\end{equation}
Under these assumptions, the set $\uuu^{(0)}\times \uuu^{(1)}$ defines a graph $\ggg$. The elements of $\uuu^{(0)}$ are the vertices of $ \ggg $ and  elements of 
$\uuu^{(1)}$ are the edges of $ \ggg $: $\s\in\uuu^{(1)}$ is  an edge between 
$ \m $ and $ \m' $ in $ \uuu^{(0)} $
if $ s \in\bar E ( \m ) \cap E ( \m') $
 -- see Fig.\ref{figsublevel} for an example.

The same graph has been constructed in \cite{Lamit} for a certain discrete
model of the Kramers--Smoluchowski equation.

We now introduce 
the discrete Laplace operator on $ \ggg $, $M _\ggg $ -- see 
\cite{CvDoSa95} for the background and results about $ M_\ggg $.
If 
the degree ${d}(\m)$ is defined as the number of edges 
at the vertex $\m$,  $M _\ggg $ is given by the matrix $ (a_{\m,\m'})_{\m,\m'\in\uuu^{(0)}}$:
\begin{equation}\label{eq:deflaplacegraphe}
a_{\m,\m'}=
\left\{
\begin{array}{ll}
{d}(\m), & \m=\m' \\
-1  &  \m\neq \m', \ \ \bar E(\m)\cap\bar E(\m') \neq \emptyset , \\
\ \ \, 0  & \text{ otherwise}
\end{array}
\right.
\end{equation}

\begin{figure}
 \center
  \scalebox{0.35}{ 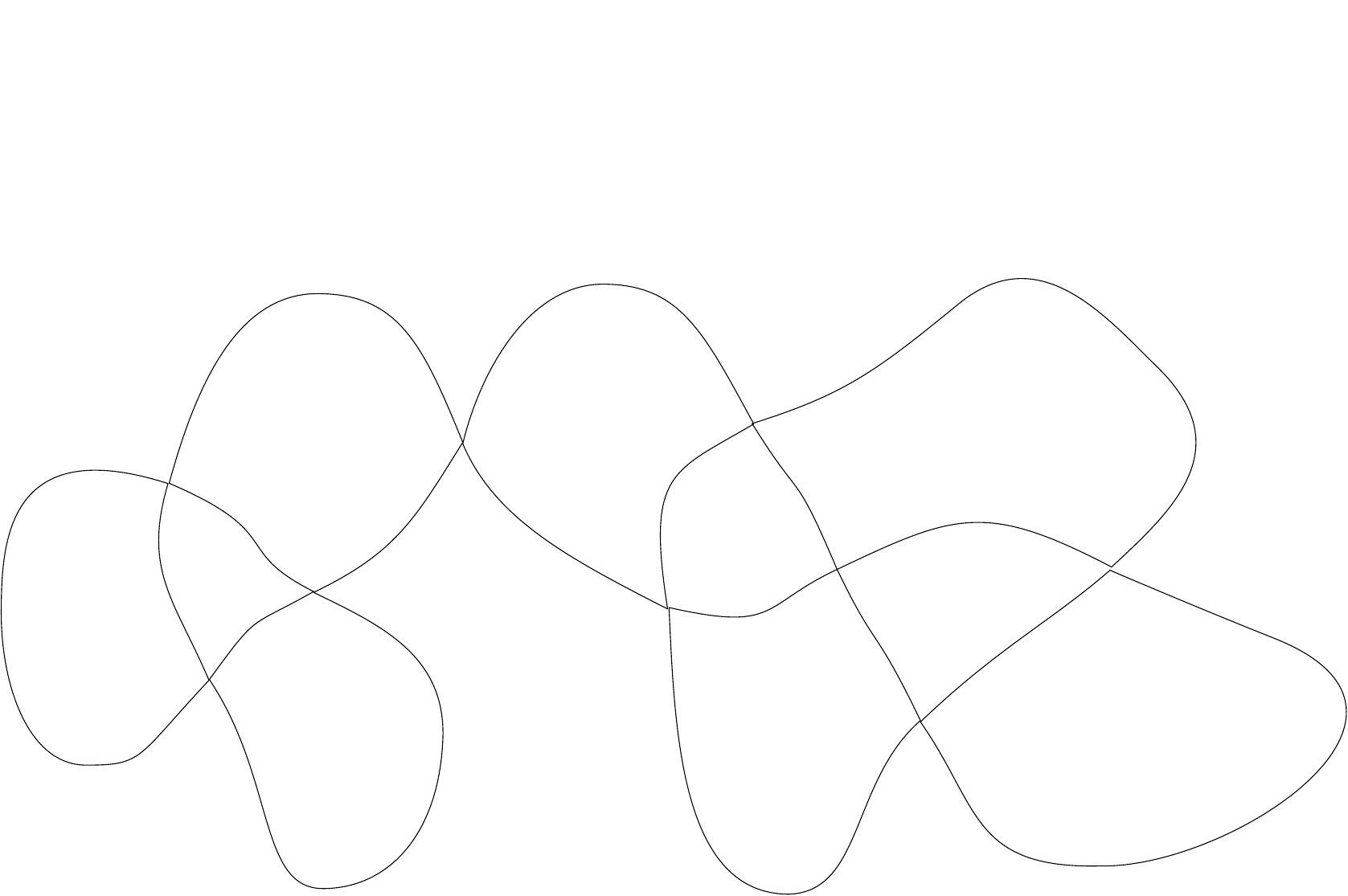 } \hspace{0.1in}
\scalebox{0.6}{ 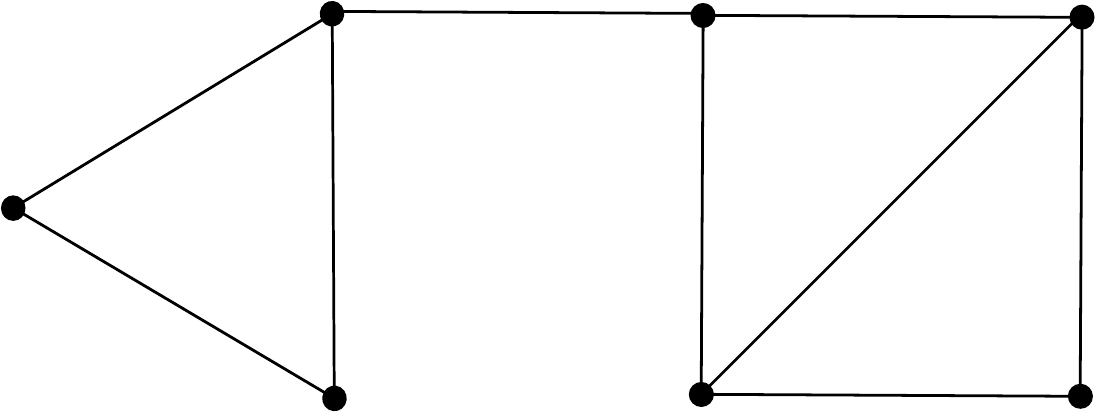}
  \caption{{\em Left:} The sublevel set $\{\varphi<\sigma_1\}$ (dashed region) associated to a potential $\varphi$ \eqref{eq:hyptopol}. The x's represent local minima, the o's, local maxima. {\em Right:} The graph associated to the
potential on the left.}
    \label{figsublevel}
\end{figure}
Among basic properties of the matrix $M_\ggg$, we recall:
\begin{itemize}
\item[-] it has a square structure $M_\ggg=\lll^*\lll$, where $\lll$ is the transpose of the incidence matrix of any oriented version of the graph $\ggg$.
In particular, $M_\ggg$ is symmetric positive.
\item[-] thanks to \eqref{eq:hyptopol} and \cite[Proposition B.1]{Mi16}, the graph $\ggg$ is connected.
\item[-] $0$  is a simple eigenvalue of $M_\ggg$
\end{itemize}

We make one more assumption which is a higher dimensional analogue of 
the hypothesis in Theorem \ref{th1}: there exist $ \mu , \nu > 0 $ such that
\begin{equation}
\label{eq:h4}
\begin{split}  
\det \varphi''(\m) &=\mu,  \ \ \ \  \forall  \m \in \mathcal U^{(0)} 
\\
\frac {\lambda_1(\s)^2}{\det\varphi''(\s)} & =-\nu, \ \ \  
\forall \s \in \mathcal U^{(1)},
 \end{split}
\end{equation}
where $\lambda_1 (\s)$ is the unique negative eigenvalue of $\varphi''(\s)$.
Assumptions \eqref{eq:h3}  and \eqref{eq:h4} can be easily removed. 
Without \eqref{eq:h4} the graph $\ggg$ is replaced by a weighted graph with a weight function depending explicitly of the values  of $\varphi''$ at critical points. 
Removing \eqref{eq:h3} leads to multigraphs in which there may 
be several edges between two
vertices. This can be also handled easily.

Assumption \eqref{eq:hyptopol} however is more fundamental and removing it 
results in major complications. 
We refer to \cite{Mi16} for results in that situation. Here we restrict ourselves to making the following

\noindent
{\bf Remark.} Under the assumption \eqref{eq:hyptopol}  the proof presented in the one dimensional case applies with relatively simple modifications. The serious difference lies in the description of $ E^{(1)} $, the eigenspace of $ \Delta_\varphi $ on one-forms, in terms of exponentially accurate quasi-modes (in one dimension it was easily done using Lemma \ref{lem:propf1}). That description is however provided by Helffer--Sj\"ostrand in the self contained Section 2.2 of \cite{HeSj85_01} -- see Theorem 2.5 there. The computation of \eqref{eq:lhat} becomes more involved and is based on the method of stationary phase -- see Helffer--Klein--Nier \cite[Proof of Proposition 6.4]{HeKlNi04_01}.

The analogue of Theorem \ref{th1} is 
\begin{theorem}\label{t:higher}
Suppose that $ \varphi $ satisfies \eqref{eq:hyp-croissancephi},\eqref{eq:hyptopol},\eqref{eq:h3} and \eqref{eq:h4}. If 
\begin{equation}
\label{eq:rho0h}
 \rho_0=\left(\frac \mu{2\pi\epsilon^2}\right)^{\frac 12} \left( \sum_{n=1}^N \beta_n 
  \indic_{E_n} + \,  r_\epsilon \right) e^{-\varphi/\epsilon^2}   , \ \ \ 
 \lim_{ \epsilon \to 0 } \Vert r_\epsilon\Vert_{L^\infty} =  0,  \ \ \beta\in 
\R^N , 
 \end{equation} 
then the solution to \eqref{eq:KS} satisfies, uniformly for $ \tau \geq 0  $,\begin{equation}\label{eq:weakconvh}
\rho( 2\epsilon^2e^{S/\epsilon^2}\tau,x)\ \rightarrow \ \sum_{n=1}^N\alpha_n(\tau)\delta_{m_n} (x) , 
\ \ \ \epsilon \to 0, 
\end{equation}
in the sense of distributions in $ x $, where  
$\alpha(t)=(\alpha_1,\ldots,\alpha_n)(\tau)$ solves 
\begin{equation}
\label{eq:dotalbis} \partial_\tau \alpha= - \kappa M_\ggg \alpha , \ \ \ \alpha ( 0 ) = \beta ,
\end{equation}
with $ M_\ggg $ is given by \eqref{eq:deflaplacegraphe} and $ \kappa =  \pi^{-1}\mu^{\frac12} \nu^{\frac12} $ with
$ \mu $ and $ \nu $ in \eqref{eq:h4}.
\end{theorem}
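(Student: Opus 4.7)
The plan is to establish a higher dimensional analogue of Theorem \ref{th:DynamWitten}, with the graph Laplacian $\kappa M_\ggg$ in place of $A_0$, and then derive Theorem \ref{t:higher} by repeating the reduction of \S \ref{s:pft} verbatim. Only two ingredients of that reduction are dimension specific: the normalization $c_n(h)$, whose leading term in dimension $d$ is $(\det \varphi''(\m_n)/\pi^d)^{1/4}=(\mu/\pi^d)^{1/4}$ under \eqref{eq:h4}, and the Laplace-method distributional limit $h^{-d/2}\indic_{E_n}e^{-2\varphi/h}\to (\pi/\mu)^{1/2}\delta_{\m_n}$. Both are routine replacements for their one-dimensional counterparts.

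For the semigroup approximation, I would follow the four-step scheme of \S \ref{s:pft}. The rough spectral estimate (Proposition \ref{prop:roughestimspectre}) is already proved in arbitrary dimension, so the projector $\Pi^{(0)}$ onto an $N$-dimensional subspace $E^{(0)}$ is available, and the quasi-modes $f_n^{(0)}=h^{-d/4}c_n(h)\chi_n e^{-\varphi/h}$ together with their projections $g_n^{(0)}=\Pi^{(0)}f_n^{(0)}$ satisfy the analogue of Lemma \ref{lem:propf0} by the same contour-integral argument, since $d_\varphi$ still annihilates $e^{-\varphi/h}$. The role played by $\Delta_{-\varphi}$ in dimension one is taken by the Witten Laplacian on $1$-forms, whose low spectrum has dimension $n_1=|\uuu^{(1)}|$; the corresponding exponentially accurate quasi-modes $f_\s^{(1)}$, one per saddle $\s\in\uuu^{(1)}$, are built by Helffer--Sj\"ostrand via a WKB expansion in a neighborhood of $\s$ using the eigenvector of $\varphi''(\s)$ for the negative eigenvalue $\lambda_1(\s)$; see \cite[Theorem 2.5]{HeSj85_01}. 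Supersymmetry and the intertwining $d_\varphi\Delta_\varphi=\Delta_\varphi^{(1)}d_\varphi$ then reduce the study of $\mmm$ to that of the matrix of $\lll=(d_\varphi)|_{E^{(0)}\to E^{(1)}}$, exactly as in Lemmas \ref{lem:approxL}--\ref{lem:asymptM}.

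The essential new calculation is the matrix element $\hat\ell_{\s,n}:=\langle f_\s^{(1)},d_\varphi f_n^{(0)}\rangle$. Its integrand is supported in a small neighborhood of $\s$ where the combined weight $e^{(\varphi-S)/h}\cdot e^{-\varphi/h}=e^{-\sigma_1/h}e^{(\varphi-\varphi(\s))/h}$ concentrates at the saddle; the method of stationary phase, as carried out in Helffer--Klein--Nier \cite[Proposition 6.4]{HeKlNi04_01}, produces
\[ \hat\ell_{\s,n}=\left(\frac{h}{\pi}\right)^{\frac12}e^{-S/h}\,\varepsilon(\s,n)\,\frac{|\lambda_1(\s)|^{\frac12}(\det \varphi''(\m_n))^{\frac14}}{|\det \varphi''(\s)|^{\frac14}}+\ooo\!\left(h^{3/2}e^{-S/h}\right), \]
where $\varepsilon(\s,n)\in\{-1,0,+1\}$ is nonzero iff $\s\in\bar E(\m_n)$ and records an orientation. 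Under \eqref{eq:h4} the prefactor collapses to $(h/\pi)^{1/2}e^{-S/h}\kappa^{1/2}\varepsilon(\s,n)$. Forming $\bar L_0^*\bar L_0$, the diagonal entry at $\m$ collects $\kappa\sum_{\s\in\bar E(\m)}1=\kappa\,d(\m)$, and the off-diagonal entry at $(\m,\m')$ equals $-\kappa$ exactly when the unique $\s\in\bar E(\m)\cap\bar E(\m')$ exists -- using \eqref{eq:h3} to rule out multiple edges. This is precisely $\kappa M_\ggg$. Connectedness of $\ggg$, guaranteed by \eqref{eq:hyptopol} via \cite[Proposition B.1]{Mi16}, ensures that $0$ is a simple eigenvalue of $\kappa M_\ggg$, so Lemma \ref{cor:spectM} and the semigroup comparison \eqref{eq:DynamWitten3} transfer without change.

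The main obstacle is the stationary phase computation of $\hat\ell_{\s,n}$: in dimension one the corresponding integral \eqref{eq:psichi} was a trivial fundamental-theorem-of-calculus evaluation, whereas here one must evaluate a $d$-dimensional oscillatory integral at a non-degenerate saddle to leading order, tracking both the ratio $|\lambda_1(\s)|/|\det\varphi''(\s)|^{\frac12}$ and the sign inherited from the WKB form of $f_\s^{(1)}$. This is where assumption \eqref{eq:h4} is crucial: relaxing it would replace $\kappa M_\ggg$ by a weighted graph Laplacian with weights determined by exactly this ratio, and the remainder of the argument would still go through.
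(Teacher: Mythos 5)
Your sketch follows exactly the route the paper indicates: the paper itself supplies no proof of Theorem~\ref{t:higher}, only a remark pointing to Helffer--Sj\"ostrand \cite[Theorem 2.5]{HeSj85_01} for the exponentially accurate $1$-form quasi-modes and to Helffer--Klein--Nier \cite[Proposition 6.4]{HeKlNi04_01} for the stationary-phase evaluation of $\hat\ell_{\s,n}$, which are precisely the two ingredients you identify, so your proposal is if anything more detailed than the paper's own treatment. One small bookkeeping slip worth fixing: since $\mu^{1/4}\nu^{1/4}=\pi^{1/2}\kappa^{1/2}$, the collapsed prefactor in your formula should read $h^{1/2}e^{-S/h}\kappa^{1/2}\varepsilon(\s,n)$ rather than $(h/\pi)^{1/2}e^{-S/h}\kappa^{1/2}\varepsilon(\s,n)$ (which is what produces $\hat L^*\hat L\approx he^{-2S/h}\kappa M_\ggg$ as required), and the Laplace-method limit in dimension $d$ should be $h^{-d/2}\indic_{E_n}e^{-2\varphi/h}\to(\pi^d/\mu)^{1/2}\delta_{\m_n}$ rather than $(\pi/\mu)^{1/2}\delta_{\m_n}$; neither affects the conclusion.
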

We also have the analogue of \eqref{eq:rhot} for any initial data.

As in the one dimensional case this theorem is a consequence of a more
precise theorem formulated using the localized states
\be\label{eq:defpsind}
\psi_n(x)=c_n(h)h^{-\frac d 4}\indic_{E_n}(x)e^{-(\varphi-\varphi_0)(x)/h}, 
\ee
where $c_n(h)$ is a normalization constant such that $\Vert \psi_n\Vert_{L^2}=1$. 
We then define a map $\Psi:\R^{N}\rightarrow L^2(\R^d)$ by 
\be\label{eq:defPsiD>1}
\Psi(\beta)=\sum_{n=1}^{N}\beta_n\psi_n, \ \ \ \forall \beta = ( \beta_1 , \ldots , \beta_{N} ) 
\in\R^{N}. 
\ee

We have the following analogue of Theorem \ref{th:DynamWitten}.

\begin{theorem}\label{th3}
Suppose  $\varphi $ satisfies \eqref{eq:hyp-croissancephi},\eqref{eq:hyptopol},\eqref{eq:h3} and \eqref{eq:h4}. There exists $C>0$ and $h_0>0$ such that for all  $\beta\in\R^{N}$ and all $0 < h < h_0 $, we have
$$
\Vert e^{-t\Delta_\varphi}\Psi(\beta)-\Psi(e^{-t\kappa\nu_h A}\beta)\Vert_{L^2}\leq Ce^{-1/Ch}, \ \ \ t\geq 0, 
$$
where $\nu_h=he^{-2S/h}$, $\kappa = \pi^{-1}\mu^{\frac12} \nu^{\frac12}$ and $A=A(h)$ is a real symmetric positive matrix having a classical expansion
$
A\sim \sum_{k=0}^\infty h^k A_k
$
and 
$A_0=M_\ggg$ with $M_\ggg$ the  Laplace matrix defined by \eqref{eq:deflaplacegraphe}.
\end{theorem}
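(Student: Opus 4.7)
\medskip

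\noindent\textbf{Proof proposal.} The plan is to replay the proof of Theorem~\ref{th:DynamWitten} almost verbatim, substituting each one-dimensional input with its higher-dimensional counterpart. Proposition~\ref{prop:roughestimspectre} is already proved in arbitrary dimension, so it provides $N=|\uuu^{(0)}|$ small eigenvalues for $\Delta_\varphi$ on functions; an analogous Hodge-theoretic statement for the Witten Laplacian on $1$-forms produces a subspace $E^{(1)}$ of dimension $n_1=|\uuu^{(1)}|$ with projector $\Pi^{(1)}$. The supersymmetric factorization $\mmm=\lll^*\lll$ with $\lll=(d_\varphi)|_{E^{(0)}\to E^{(1)}}$ is unchanged. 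The $0$-form quasi-modes $f_n^{(0)}:=c_n(h)h^{-d/4}\chi_n e^{-\varphi/h}$ with cutoffs $\chi_n$ as in~\eqref{eq:cutoff} satisfy the analogue of Lemma~\ref{lem:propf0} without modification, the normalization being $c_{n,0}=(\det\varphi''(\m_n)/\pi^d)^{1/4}=\mu^{1/4}\pi^{-d/4}$ by~\eqref{eq:h4}.

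The genuinely new step is the construction of quasi-modes for $E^{(1)}$: in dimension $>1$ the naive choice $\theta_k e^{(\varphi-S)/h}$ is not an approximate null vector of $\Delta_\varphi^{(1)}$. Instead I would invoke the WKB construction of Helffer--Sj\"ostrand \cite[\S 2, Theorem 2.5]{HeSj85_01} to build, at each saddle $\s_k\in\uuu^{(1)}$, a compactly supported $1$-form
\[
 f_k^{(1)}(x)=h^{-d/4}\theta_k(x)\,a_k(x;h)\,e^{-\phi_k(x)/h},
\]
where $\phi_k$ is the local Agmon solution of $|\nabla\phi_k|^2=|\nabla\varphi|^2$ with $\phi_k(\s_k)=\sigma_1$ and $\phi_k\geq\varphi$, the symbol $a_k\sim\sum_{j\geq 0}a_{k,j}h^j$ is determined by $d_\varphi^*f_k^{(1)}=\ooo(h^\infty e^{-\phi_k/h})$, and $\theta_k$ is a cutoff as in~\eqref{eq:delta0}. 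Assumption~\eqref{eq:h3} keeps the combinatorics as simple as in dimension one, so that the analogue of Lemma~\ref{lem:propf1} --- $\<f_k^{(1)},f_l^{(1)}\>=\delta_{kl}+\ooo(e^{-\alpha/h})$ and $g_k^{(1)}:=\Pi^{(1)}f_k^{(1)}=f_k^{(1)}+\ooo(e^{-\alpha/h})$ --- follows by the same Cauchy-integral argument as in the one-dimensional case. Gram--Schmidt~\eqref{eq:GrSch} then produces orthonormal bases of $E^{(0)}$ and $E^{(1)}$.

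The computational heart is the analogue of Lemmas~\ref{lem:computhatL}--\ref{lem:asymptM}: the elementary identity~\eqref{eq:psichi} is replaced by a stationary-phase evaluation of
\[
 \hat\ell_{k,n}=\<f_k^{(1)},d_\varphi f_n^{(0)}\>=h^{1-d/2}c_n(h)\int \theta_k\,\overline{a_k}\cdot\nabla\chi_n\;e^{-(\phi_k+\varphi)/h}\,dx,
\]
after an integration by parts that uses $d_\varphi e^{-\varphi/h}=0$. The integrand is concentrated where $\nabla\chi_n\neq 0$ meets $\supp\theta_k$, hence in a neighbourhood of $\s_k$, and there the phase $\phi_k+\varphi$ attains the value $2S$ nondegenerately in the stable directions of $\varphi''(\s_k)$. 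This is precisely the integral computed in \cite[Proof of Prop.~6.4]{HeKlNi04_01}; under~\eqref{eq:h4} all the determinantal factors collapse to $\kappa=\pi^{-1}\mu^{1/2}\nu^{1/2}$, giving
\[
 \hat\ell_{k,n}=\pm(\kappa h/\pi)^{1/2}e^{-S/h}\bigl(1+\ooo(h)\bigr)\quad\text{if }\s_k\in\bar E(\m_n),
\]
and $0$ otherwise, the sign encoding an orientation of $\ggg$. Therefore $\hat L_0^*\hat L_0=\kappa M_\ggg$ and so $A_0=\kappa M_\ggg$.

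With $M=he^{-2S/h}A$ and $A_0=\kappa M_\ggg$ established, the endgame of \S\ref{s:pft} transplants without change: write $u_0=\Pi^{(0)}u_0+(I-\Pi^{(0)})u_0$, propagate the first piece using the matrix $e^{-tM}$ expressed in the orthonormal basis, and control the second piece by the exponential decay $\Vert e^{-t\Delta_\varphi}(I-\Pi^{(0)})\Vert\leq e^{-c\varepsilon_0 ht}$. The main obstacles are therefore the $1$-form WKB construction and the stationary-phase evaluation of $\hat L$: both are higher-dimensional phenomena with no counterpart in dimension one, and both are supplied cleanly by the cited references.
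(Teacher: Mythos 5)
Your plan reproduces precisely the route the paper itself sketches in the remark preceding Theorem~\ref{t:higher}: carry the one-dimensional argument over unchanged on the $0$-form side, replace the ad hoc $1$-form quasi-modes of Lemma~\ref{lem:propf1} by the Helffer--Sj\"ostrand WKB construction from \cite[Theorem~2.5]{HeSj85_01}, and evaluate $\hat L$ by stationary phase as in \cite[Proof of Prop.~6.4]{HeKlNi04_01}; your observation that the phase $\phi_k+\varphi$ is constant equal to $2S$ along the unstable gradient line through $\s_k$ and nondegenerate in the $d-1$ stable directions is exactly the point that makes the stationary-phase step go through. One bookkeeping slip: with $\kappa=\pi^{-1}\mu^{1/2}\nu^{1/2}$, the entries should come out as $\hat\ell_{k,n}=\pm(\kappa h)^{1/2}e^{-S/h}\bigl(1+\ooo(h)\bigr)$ rather than $(\kappa h/\pi)^{1/2}$ --- the extra $\pi^{-1/2}$ would give $\hat L_0^*\hat L_0=(\kappa/\pi)M_\ggg$, contradicting your own next line; the conclusion $A_0=M_\ggg$ for the dynamics $e^{-t\kappa\nu_h A}$ is what the theorem asserts, and the corrected constant yields it.
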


\begin{figure}[h]
 \center
  \scalebox{0.6}{ 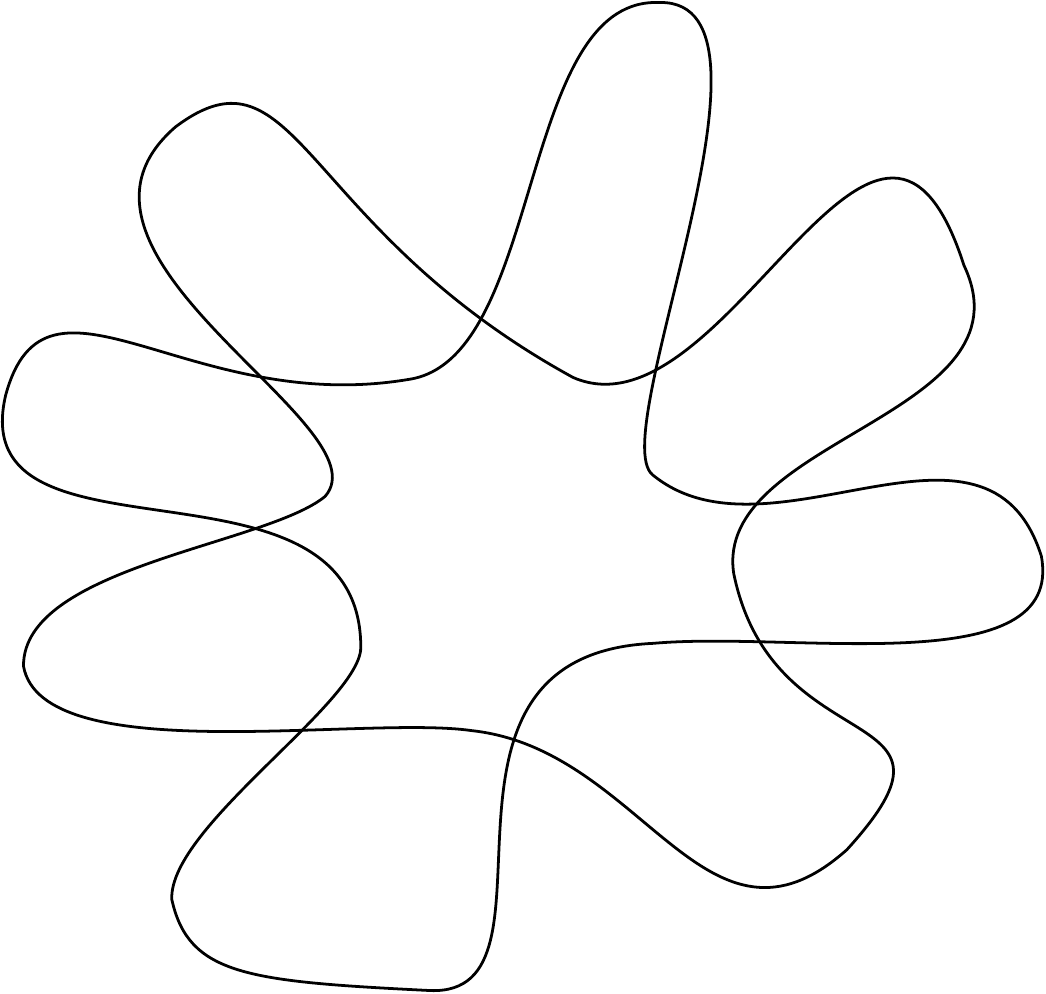} \hspace{0.1in}
  \includegraphics[scale=0.5]{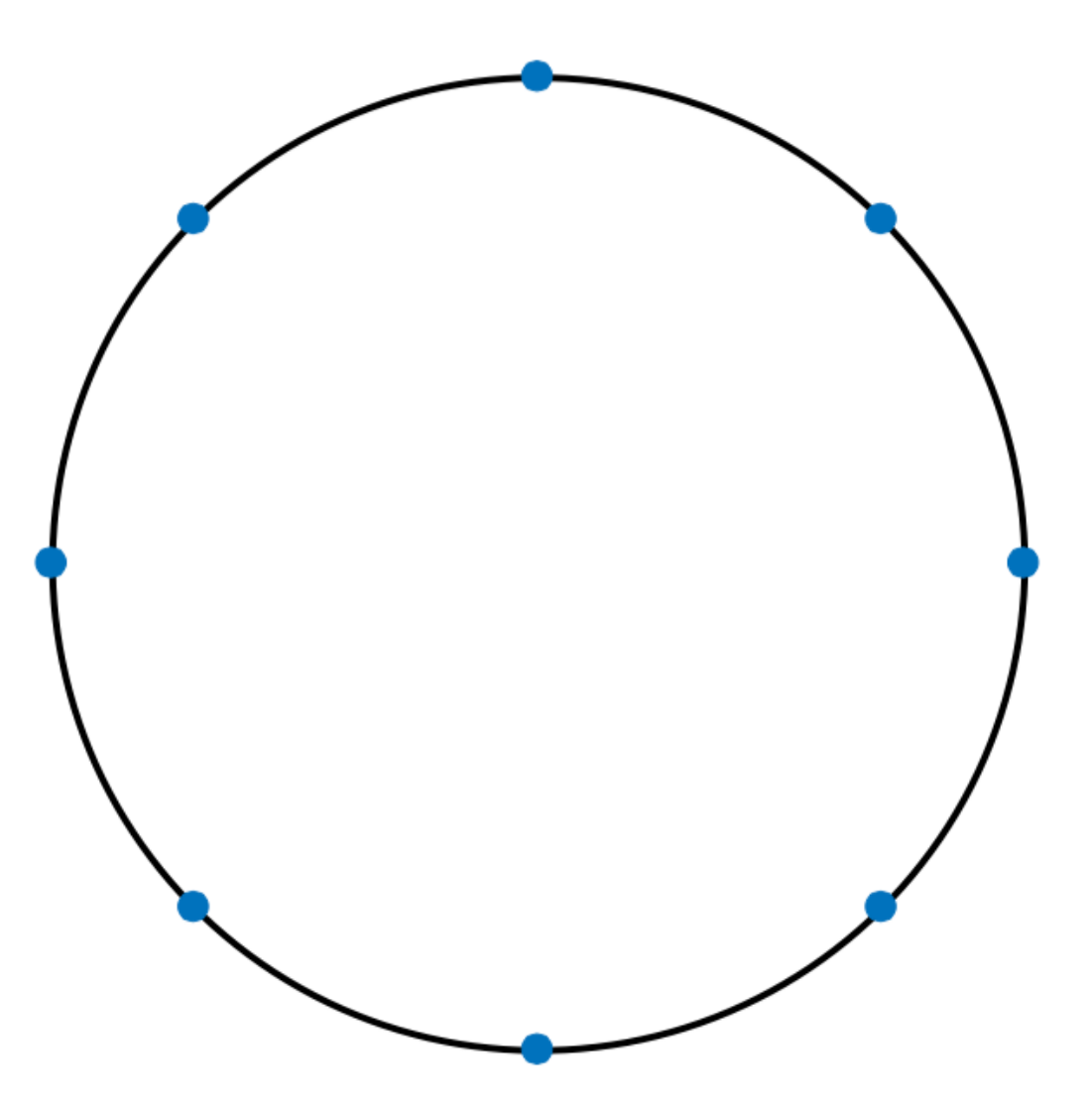}
  \caption{A two dimensional potential which is a cyclic analogue of 
the potential shown in Fig.\ref{fig1}: the corresponding 
matrix describing the Kramer--Smoluchowski evolution is given by \eqref{eq:A1}. It should compared to the matrix \eqref{eq:simpleA0} for the potential in Fig.\ref{fig1}. The corresponding cyclic graph is shown on the right.}
    \label{f2}
\end{figure}

We conclude by
one example \cite[\S 6.3]{Mi16}  for which the graph $\ggg$ is elementary. 
We assume that $ d = 2 $, $ \varphi $ has a maximum at  
$ x = 0 $, there are $ N $ minima, $ m_n $, $ N $ saddle points, $s_n $, and that \eqref{eq:hyptopol} holds -- see Fig.\ref{f2}.  We assume also that 
\[   \det \varphi'' (\m_n ) = \mu > 0 , \ \ \ \frac{\lambda_1(\s_n)}{\lambda_2(\s_n)} = - \nu < 0 , \]
where for $\s\in\uuu^{(1)}$, $\lambda_1(\s)>0>\lambda_2(\s)$ denote the two eigenvalues of $\varphi''(\s)$.

Then assumptions of Theorem \ref{th3} are satisfied. The graph $\ggg$  associated to $\varphi$ is  the cyclic graph with $N$ vertices and the corresponding  Laplacian is  given by
\begin{equation}
\label{eq:A1} \aaa_\ggg =
\begin{pmatrix} 2&\!\! \! - 1  &0&0&\ldots&\ldots&\ldots&\!\! \! - 1\\
\!\! \! - 1  &2&\!\! \! - 1  &0&\ldots&\ldots&\ldots&0\\
0&\!\! \! - 1  &2&\!\! \! - 1  &0&\ldots&\ldots&0\\
\vdots&0&\!\! \! - 1  &2&\ddots&\ddots&\ldots&0\\
\vdots&\vdots&\ddots&\ddots&\ddots&\ddots&\ddots&0\\
\vdots&\vdots&\ddots&\ddots&\ddots&\ddots&\!\! \! - 1  &0\\
0&\vdots&\ddots&\ddots&\ddots&\!\! \! - 1  &2&\!\! \! - 1  \\
\!\! \! - 1&0&\ldots&\ldots&\ldots&0&\!\! \! - 1  &2
\end{pmatrix}.
\end{equation}

\def\arXiv#1{\href{http://arxiv.org/abs/#1}{arXiv:#1}}

\bibliographystyle{amsplain}
\bibliography{ref_schmolu}

\end{document}